\pgfplotsset{compat=1.3}
\NewDocumentCommand{\dcurl}{sO{}m}{%
  \IfBooleanTF{#1}
    {\dcurlext{#3}}
    {\dcurlx[#2]{#3}}%
}
\NewDocumentCommand{\dcurlext}{m}{%
  \sbox0{%
    \mathsurround=0pt
    $\left\{\vphantom{#1}\right.\kern-\nulldelimiterspace$%
  }%
  \sbox2{\{}%
  \ifdim\ht0=\ht2
    \{\kern-.625\wd2 \{#1\}\kern-.625\wd2 \}%
  \else
    \left\{\kern-.7\wd0\left\{#1\right\}\kern-.7\wd0\right\}%
  \fi
}
\NewDocumentCommand{\dcurlx}{om}{%
  \sbox0{\mathsurround=0pt$#1\{$}%
  \sbox2{\{}%
  \ifdim\ht0=\ht2
    \{\kern-.625\wd2 \{#2\}\kern-.625\wd2 \}%
  \else
    \mathopen{#1\{\kern-.7\wd0 #1\{}
    #2
    \mathclose{#1\}\kern-.7\wd0 #1\}}
  \fi
}
\NewDocumentCommand{\dsquare}{sO{}m}{%
  \IfBooleanTF{#1}
    {\dsquareext{#3}}
    {\dsquarex[#2]{#3}}%
}
\NewDocumentCommand{\dsquareext}{m}{%
  \sbox0{%
    \mathsurround=0pt
    $\left[\vphantom{#1}\right.\kern-\nulldelimiterspace$%
  }%
  \sbox2{[}%
  \ifdim\ht0=\ht2
    [\kern-.525\wd2 [#1]\kern-.525\wd2 ]%
  \else
    \left[\kern-.6\wd0\left[#1\right]\kern-.6\wd0\right]%
  \fi
}
\NewDocumentCommand{\dsquarex}{om}{%
  \sbox0{\mathsurround=0pt$#1[$}%
  \sbox2{[}%
  \ifdim\ht0=\ht2
    [\kern-.525\wd2 [#2]\kern-.525\wd2 ]%
  \else
    \mathopen{#1[\kern-.6\wd0 #1[}
    #2
    \mathclose{#1]\kern-.6\wd0 #1]}
  \fi
}
\newcommand{\dx}{\,\mathrm{d}x}
\newcommand{\ds}{\,\mathrm{d}s}
\newcommand{\dshat}{\,\mathrm{d}\hat{s}}
\newcommand{\dxtilde}{\,\mathrm{d}\tilde{x}}
\newcommand{\T}{^\mathsf{T}}
\newcommand{\Tinv}{^\mathsf{-T}}
\newcommand{\If}{\Gamma^{\mathrm{if}}}
\newcommand{\Ifh}{\Gamma^{\mathrm{if}}_h}
\newcommand{\Ifm}{\Gamma^{\mathrm{if}}_m}
\newcommand{\functions}[2]{\left\{{#1}\,\middle|\,{#2}\right\}}
\renewcommand{\div}{\text{div }}
\newcommand{\Shif}{\mathcal{S}_h^{\mathrm{if}}}
\newcommand{\Shb}{\mathcal{S}_h^{\mathrm{b}}}
\newcommand{\Shint}{\mathcal{S}_h^0}
\newcommand{\Sh}{\mathcal{S}_h}
\newcommand{\Ghb}{\Gamma_h^{\mathrm{b}}}
\newcommand{\Ghint}{\Gamma_h^0}
\newcommand{\Gh}{\Gamma_h}
\newcommand{\id}{\mathrm{id}}
\newcommand{\uproman}[1]{\uppercase\expandafter{\romannumeral#1}}
\newcommand{\customlabel}[2]{%
\protected@write \@auxout {}{\string \newlabel {#1}{{#2}{}}}}
\definecolor{highlight}{RGB}{255,70,0}
\newtheorem{thm}{Theorem}
\newtheorem{lem}{Lemma}
\newtheorem{prop}{Proposition}
\newtheorem{rmk}{Remark}
\newcommand{\Addresses}{{
  \bigskip
  \small
  Sören Bartels, \textsc{Abteilung für angewandte Mathematik, Albert-Ludwigs-Universität Freiburg, Hermann-Herder-Str. 10, 79104 Freiburg im Breisgau, Germany}\par\nopagebreak
  \textit{Email address}: \texttt{bartels@mathematik.uni-freiburg.de}

  \medskip

  Andrea Bonito, \textsc{Texas A\&M University, College Station, TX 77843, USA}\par\nopagebreak
  \textit{Email address}: \texttt{bonito@tamu.edu}

  \medskip

  Philipp Tscherner, \textsc{Abteilung für angewandte Mathematik, Albert-Ludwigs-Universität Freiburg, Hermann-Herder-Str. 10, 79104 Freiburg im Breisgau, Germany}\par\nopagebreak
  \textit{Email address}: \texttt{philipp.tscherner@mathematik.uni-freiburg.de}
}}
\begin{document}
\thispagestyle{empty}
\begin{center}
	\textbf{\large{ERROR ESTIMATES FOR A LINEAR FOLDING MODEL}}\bigskip\\
	{\large Sören Bartels, Andrea Bonito and Philipp Tscherner}
\end{center}
\medskip

\begin{abstract}
An interior penalty discontinuous Galerkin method is devised to approximate minimizers of a linear folding model by discontinuous isoparametric finite element functions that account for an approximation of a folding arc. The numerical analysis of the discrete model includes an a priori error estimate in case of an accurate representation of the folding curve by the isoparametric mesh. Additional estimates show that geometric consistency errors may be controlled separately if the folding arc is approximated by piecewise polynomial curves. Various numerical experiments are carried out to validate the a priori error estimate for the folding model.
\end{abstract}
\medskip

\let\thefootnote\relax\footnotetext{
\textit{Date}: \today\\
\textit{Keywords}: Linear elasticity, folding, plates, discontinuous Galerkin method, error estimate\\
\textit{2020 Mathematics Subject Classification}: 65N30, 65N15, 74K20}

\section{Model Problem}
Due to the appearance in natural processes and their importance to modern technical devices, foldable structures have attracted a lot of attention in recent decades. Applications include flapping mechanisms in biology \cite{folding_nature_1,folding_nature_2}, protein folding \cite{folding_protein_1,folding_protein_2}, movable structures in architecture \cite{folding_architecture_1,folding_architecture_2}, sheet (metal) pressing and wrapping \cite{sheet_pressing,sheet_wrapping} or origami and kirigami \cite{origami_1,origami_2,origami_3.2,origami_4}. We address in this article the numerical discretization of a linear folding model. The interior penalty discontinuous Galerkin method turns out to be a practical candidate for this purpose as gradient jumps of the deformation may be neglected along the interface, thereby allowing for the simulation of foldable configurations. 
A corresponding large deformation model has recently been derived via dimension reduction by \textsc{Bartels}, \textsc{Bonito} and \textsc{Hornung} \cite{bartels2021modeling}. The authors adapt arguments from the seminal work of \textsc{Friesecke}, \textsc{James} and \textsc{Müller} \cite{fjm} to account for the presence of a folding arc and follow ideas of \textsc{Bartels} \cite{bartels_dkt} and \textsc{Bonito}, \textsc{Nochetto} and \textsc{Ntogkas} \cite{Bonito_Bending} for the numerical realization.

To introduce the discontinuous Galerkin method, we follow the derivation of \cite{Bonito_Bending} for a classical bending problem and include a folding mechanism.
Let $\Omega\subset\mathbb{R}^2$ be a bounded polygonal Lipschitz domain and assume that $\Omega$ is partitioned into two subdomains $\Omega_1$ and $\Omega_2$ by an interface $\If$, in the sense that $\Omega=\Omega_1\cup\Omega_2\cup\If$, as shown in Figure \ref{domain}. We consider small displacements that are allowed to fold along the interface, giving rise to a linear bending problem. The problem is closely related to the linear Kirchhoff model, see e.g. \cite[Chapter 8]{nonlinear_bartels}, in which the deformation is assumed to be a perturbation of the identity in vertical direction.
For this type of small deflections the usual isometry constraint of the nonlinear model is negligible.
In particular, for a suitably rescaled body force $f\in L^2(\Omega)$, we seek a minimizer $u:\Omega\to\mathbb{R}$ of the continuous energy
\begin{align}\label{cont_energy}
E(u) = \frac{1}{2}\int_{\Omega\setminus\If} |D^2u|^2\dx-\int_\Omega f u\dx\,,
\end{align}
in the set of admissible functions
\begin{align}
\mathbb{V}(g,\Phi) := \functions{v\in H^2(\Omega_1\cup\Omega_2)\cap H^1(\Omega)}{v=g,\,\nabla v=\Phi\text{ on }\partial_D\Omega}\,.
\end{align}
The function space encodes clamped boundary conditions $u=g$ and $\nabla u=\Phi$ on a subset $\partial_D\Omega\subset\partial\Omega$. 
Here we assume that the boundary data $g\in H^{1/2}(\partial_D \Omega)$ and $\Phi\in [H^{1/2}(\partial_D \Omega)]^2$ are traces of functions  $g\in H^{1}(\Omega)$ and $\Phi\in [H^{1}(\Omega)]^2$. In addition, the configuration is allowed to fold along $\If$ since $u$ is only required to be piecewise in $H^2$ and globally in $H^1$.

The Euler--Lagrange equation for a minimizer $u\in\mathbb{V}(g,\Phi)$ of \eqref{cont_energy} is
\begin{align}\label{Euler_Lagrange}
\int_{\Omega\setminus\If}D^2u:D^2v\dx=\int_\Omega f v\dx\quad\forall v\in\mathbb{V}(0,0)\,.
\end{align}
The strong form of \eqref{Euler_Lagrange} reads
\begin{align}\label{strong_form}
\div\div D^2u=\Delta^2u=f\quad\text{in }\Omega\,, 
\end{align}
with natural boundary conditions
\begin{align}\label{natural_boundary}\tag{NBC}
\partial_\eta\nabla u = D^2u\,\eta=0,\quad
\partial_\eta\Delta u = (\div D^2u)\,\eta = 0
\quad\text{on }\partial\Omega\setminus\partial_D\Omega\,,
\end{align}
and the outward unit normal vector $\eta$ to $\Omega$, as well as natural interface conditions 
\begin{align}\label{natural_interface}\tag{NIC}
\dsquare{u}=0,\quad
{\partial_\eta\nabla u = 0},\quad
\dsquare{\partial_\eta\Delta u} = 0
\quad\text{on }\If\,,
\end{align}
with $\dsquare{u}:=u|_{\Omega_2}-u|_{\Omega_1}$ and the outward unit normal vector $\eta$ pointing from $\Omega_1$ into $\Omega_2$. 
The first interface term arises from the weak differentiability condition $u\in H^1(\Omega)$. Besides, for smooth test functions one expects $\dsquare{\partial_\eta\nabla u}=\dsquare{\partial_\eta\Delta u} = 0$ on $\If$. However, since we allow test functions $v\in\mathbb{V}(0,0)$ to have kinks across the interface $\If$, the second interface condition in \eqref{natural_interface} becomes $\partial_\eta\nabla u = 0$.
It infers that the curvature of the deformation along the fold vanishes in direction normal to the interface.

In what follows we assume that the geometry of the fold is compatible with the boundary conditions, such that there exists a unique solution to the problem. For instance, if the interface is straight and clamped boundary conditions are only imposed on one side of the fold, a solution on the other side of the fold might not be unique.

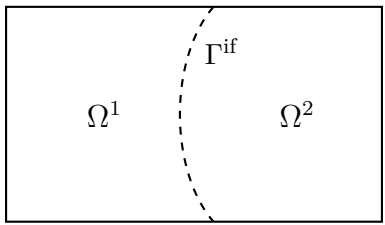
\begin{figure}[H]
\centering
	\begin{tikzpicture}[scale=1]
		\begin{axis}[axis lines=none, xmin=-3, xmax=3, width=9cm, height=5cm]
			\draw [samples=100, black, thick, domain=123.5:236.5, name path=C, dashed] plot (axis cs:{0.65+0.8*cos(\x)}, {0+0.6*sin(\x)});
			\addplot[black, thick, name path=A] coordinates { (0.21, -0.5) (2, -0.5) (2, 0.5) (0.21, 0.5)};
			\addplot[black, thick, name path=B] coordinates { (0.21, -0.5) (-2, -0.5) (-2, 0.5) (0.21, 0.5)};
			\node [right, black] at (axis cs:0.8, 0) {\large$\Omega^2$};
			\node [right, black] at (axis cs:-1.25, 0) {\large$\Omega^1$};
			\node [below, black] at (axis cs:0.30, 0.40) {$\If$};
		\end{axis}
	\end{tikzpicture}
	\caption{Sketch of the domain $\Omega=\Omega_1\cup\Omega_2\cup\If$ with curved interface $\If$, based on \cite{manolis}.}
	\label{domain}
\end{figure}

The outline of the article is as follows. In Section \ref{sec:discretization} we introduce the isoparametric interior penalty discontinuous Galerkin method and show existence and uniqueness of discrete solutions. The a priori error estimate is derived in Section \ref{sec:error_estimate}. Section \ref{sec:if_approx} addresses the polynomial approximation of the curved interface. Numerical experiments are stated in Section \ref{sec:experiments}.

\section{Discretization}\label{sec:discretization}
In this section we introduce the finite element discretization of the linear folding model. The isoparametric method accounts for an accurate representation of piecewise polynomial interfaces while the discontinuous Galerkin method allows us to simulate foldable structures when gradient jumps of the deformation are dropped along the folding curve. 

\subsection{Isoparametric Method}
We assume that every element of the triangulation $T\in\mathcal{T}_h$ is given by the image of a reference triangle $\widehat{T}$ under an isoparametric elemental diffeomorphism $\psi_T:\widehat{T}\to T$ of polynomial degree $k\in\mathbb{N}$, in particular $\psi_T\in[\mathbb{P}_k(\widehat{T})]^2$. Assuming shape regularity of the mesh provides that the derivative scales linearly with the diameter $h_T$ of the element $T$, in the sense that $||D\psi_T||_{L^\infty(\widehat{T})}\backsim h_T$ and $||D\psi_T^{-1}||_{L^\infty(T)}\backsim h_T^{-1}$. 
Proposition 1 from \textsc{Lenoir} \cite{Lenoir} (see also \textsc{Scott} \cite{Scott}) guarantees that we may choose mappings $\psi_T$, such that
\[
||D^m\psi_T||_{L^\infty(\widehat{T})}\lesssim h_T^{m}\quad\text{for }2\leq m\leq k+1\,.
\]
This is the case if nodes (i.e. the degrees of freedom) of the curved element $T$ are $h^m$-close to the corresponding nodes of the linear triangle $\widetilde{T}$ with the same vertices as $T$, see \textsc{Ciarlet} and \textsc{Raviart} \cite{Ciarlet_Raviart}.
Since $||D^2\psi_T||_{L^\infty(\widehat{T})}\lesssim||D\psi_T||_{L^\infty(\widehat{T})}$, we also conclude from \textsc{Ern} and \textsc{Guermond} \cite[Lemma 13.5]{ern_guermond} the converse estimate
\[
||D^m\psi_T^{-1}||_{L^\infty(T)}\lesssim h_T^{-m}\quad\text{for }2\leq m\leq k+1\,.
\]

For further aspects of isoparametric mappings we refer the reader to \cite{Ciarlet}. 
In what follows, we assume that the interface is accurately resolved by the isoparametric mesh, or equivalently, that $\If$ is given by the image of a piecewise polynomial map of degree $k\in\mathbb{N}$ from straight segments. This allows us to avoid geometrical errors in the a priori analysis. Corresponding errors are controlled separately, which is the subject of Section \ref{sec:if_approx}. 

The main feature of isoparametric methods is that the polynomial degree of the elemental mappings and that of the discrete functions coincide. With that in mind, we define the discrete space of discontinuous piecewise polynomials on a reference element $\widehat{T}$ by
\begin{align}
\mathbb{V}_h^k:=\functions{v\in L^2(\Omega)}{v|_T=\widehat{v}\circ\psi_T^{-1},\,\widehat{v}\in\mathbb{P}_k(\widehat{T})\;\;\forall T=\psi_T(\widehat{T})\in\mathcal{T}_h}\,.
\end{align}
Note that functions in $\mathbb{V}_h^k$ are in general not polynomials on the physical element $T$ if $k\geq2$.
We denote by $\Shint$ and $\Shb$ the set of edges contained in $\Omega$ (up to endpoints) and $\partial_D\Omega$, respectively. Contributions on $\partial\Omega\setminus\partial_D\Omega$ vanish due to the natural boundary conditions \eqref{natural_boundary}. Hence, the set of edges that enter the discontinuous Galerkin formulation is defined by $\Sh=\Shint\cup\Shb$. Since the (approximate) interface is accurately resolved by the mesh, we denote by $\Shif$ the set of edges contained in $\If$ (up to endpoints). The corresponding skeleton is denoted by $\Gh :=\cup\{S\in\Sh\}$. The sets $\Ghint,\Ghb$ and $\Ifh$ are defined similarly.

\subsection{Interior Penalty Discontinuous Galerkin Method}
To introduce the discontinuous Galerkin method we follow standard procedure from \cite{Bonito_Bending,dg_methods}. We denote by $\dsquare{\,\cdot\,}$ and $\dcurl{\cdot}$ the \emph{jump} and \emph{average} of a function $v$ over an inner side $S\in\Shint$ with $S=\partial T_+\cap\partial T_-$ and the unit normal $\eta$ pointing from $T_+$ into $T_-$. In particular, let
\begin{align}
\dsquare{v}|_S(x):=v|_{T_+}(x)-v|_{T_-}(x),\quad \dcurl{v}|_S(x):=\frac{1}{2}(v|_{T_+}(x)+v|_{T_-}(x))\,.
\end{align}
For boundary sides $S\in\Shb$ we set $\dcurl{v}|_S:=v|_S$ and consider the space
\begin{align}\label{discrete_space_boundary}
\mathbb{V}_h^k(g,\Phi):=\functions{v\in\mathbb{V}_h^k}{\dsquare{v}|_S=g-v,\,\dsquare{\nabla v}|_S=\Phi-\nabla v \text{ on every }S\in\Shb} 
\end{align}
to weakly enforce the Dirichlet boundary data. The sets $\mathbb{V}_h^k$ and $\mathbb{V}_h^k(g,\Phi)$ coincide but the latter contains a different interpretation of boundary jumps.

The elementwise applied differential operators are denoted by a subindex $h$. For example, the elementwise gradient is defined via $(\nabla_h v)|_T:=\nabla (v|_T)$.

To derive the bilinear form of the discontinuous Galerkin method, we multiply the strong form \eqref{strong_form} by a test function $v_h\in\mathbb{V}_h^k(0,0)$ and use elementwise integration by parts twice. Summing over all elements, we get
\begin{align}
\langle f,v_h\rangle_{L^2(\Omega)} = \sum_{T\in\mathcal{T}_h}\langle D^2u,D_h^2v_h\rangle_{L^2(T)}-\langle \partial_{\eta}\nabla u,\nabla_h v_h\rangle_{L^2(\partial T)}+\langle \partial_{\eta}\Delta u,v_h\rangle_{L^2(\partial T)}\,.
\end{align}
We collect the contributions over each side $S$ with a change in sign due to the definition of the jump and the normal $\eta$. Side terms on $\partial\Omega\setminus\partial_D\Omega$ vanish because of the natural boundary conditions \eqref{natural_boundary}. In addition, terms involving $\partial_\eta\nabla u$ vanish on $\Ifh$ due to the second interface condition \eqref{natural_interface}. Assuming that $u\in H^4(\Omega_1\cup\Omega_2)\cap H^1(\Omega)$ and incorporating again the interface conditions, the jumps $\dsquare{\partial_\eta\nabla u}$ and $\dsquare{\partial_\eta\Delta u}$ vanish on every edge $S\in\Sh\setminus\Shb$. On boundary sides $S\in\Shb$ we use the conventions $\dcurl{\partial_\eta\nabla u}=\partial_\eta\nabla u$ and $\dsquare{v}=-v$.
Applying the elementary formula $\dsquare{a\,b}=\dsquare{a}\dcurl{b}+\dsquare{b}\dcurl{a}$, we arrive at
\begin{align*}
\langle f,v_h\rangle_{L^2(\Omega)} = \langle D^2u,D_h^2v_h\rangle_{L^2(\Omega)}+\langle \dcurl{\partial_\eta\nabla u},\dsquare{\nabla_h v_h}\rangle_{L^2(\Gamma_h\setminus\Ifh)}-\langle \dcurl{\partial_\eta\Delta u},\dsquare{v_h}\rangle_{L^2(\Gamma_h)}\,.
\end{align*}
Using $\dsquare{u}=0$ on $\Gamma_h$ and $\dsquare{\nabla u}=0$ on $\Gamma_h\setminus\Ifh$, we add vanishing penalty terms to arrive at the following formulation including a symmetric bilinear form
\begin{align}\label{bilinearform}
\begin{split}
\langle f,v_h\rangle_{L^2(\Omega)} &= \langle D^2u,D_h^2v_h\rangle_{L^2(\Omega)}\\
&\quad+\langle \dcurl{\partial_\eta\nabla u},\dsquare{\nabla_h v_h}\rangle_{L^2(\Gamma_h\setminus\Ifh)}+\langle \dcurl{\partial_\eta\nabla_h v_h},\dsquare{\nabla u}\rangle_{L^2(\Gamma_h\setminus\Ifh)}\\
&\quad-\langle \dcurl{\partial_\eta\Delta u},\dsquare{v_h}\rangle_{L^2(\Gamma_h)}-\langle \dcurl{\partial_\eta\Delta_h v_h},\dsquare{u}\rangle_{L^2(\Gamma_h)}\\
&\quad+\gamma_1\langle h^{-1}\dsquare{\nabla u},\dsquare{\nabla_h v_h}\rangle_{L^2(\Gamma_h\setminus\Ifh)}+\gamma_0\langle h^{-3}\dsquare{u},\dsquare{v_h}\rangle_{L^2(\Gamma_h)}\\
&=:a_h(u,v_h)\,,
\end{split}
\end{align}
where the parameter $h$ is locally equivalent to the size $h_S$ of an edge $S$.
Hence, the discontinuous Galerkin method consists of finding $u_h\in\mathbb{V}_h^k(g,\Phi)$, such that
\begin{align}\label{dg_equation}
a_h(u_h,v_h)=\langle f,v_h\rangle_{L^2(\Omega)} \quad \forall v_h\in\mathbb{V}_h^k(0,0)\,.
\end{align}

The regularity of solutions to second order interface problems generally depends on geometric properties of the interface \cite{Petzoldt2001RegularityRF}. We implicitly assume that $\If$ divides $\Omega$ in such a way, that the solution of \eqref{strong_form} satisfies $u\in H^4(\Omega_1\cup\Omega_2)\cap H^1(\Omega)$. Weaker regularity assumptions would be sufficient to ensure the consistency of the method in the sense of the following proposition,  the additional regularity is required for the error analysis.

\begin{prop}[Galerkin orthogonality]\label{galerkin}
Assume that the solution of the strong form \eqref{strong_form} satisfies $u\in H^4(\Omega_1\cup\Omega_2)\cap H^1(\Omega)$, then we have
\begin{align}
	a_h(u-u_h,v_h)=0 \quad \forall v_h\in\mathbb{V}_h^k(0,0)\,.
\end{align}
\end{prop}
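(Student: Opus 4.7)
The plan is to reduce the statement to \emph{consistency} of the scheme, namely the identity $a_h(u,v_h)=\langle f,v_h\rangle_{L^2(\Omega)}$ for every $v_h\in\mathbb{V}_h^k(0,0)$. Once this is established, subtracting from \eqref{dg_equation} and using linearity of $a_h(\cdot,v_h)$ in its first argument immediately yields $a_h(u-u_h,v_h)=0$.

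To prove consistency I would retrace the derivation leading to \eqref{bilinearform} and check that every step is legitimate under the regularity $u\in H^4(\Omega_1\cup\Omega_2)\cap H^1(\Omega)$. Test the strong form \eqref{strong_form} against $v_h$ (which is smooth on each element), apply elementwise integration by parts twice — justified since $u|_{\Omega_i}\in H^4(\Omega_i)$ produces $L^2$ traces of $\partial_\eta\Delta u$ and $H^{1/2}$ traces of $D^2 u$ on every $\partial T$ — and regroup the side contributions via the algebraic identity $\dsquare{ab}=\dsquare{a}\dcurl{b}+\dcurl{a}\dsquare{b}$. Three structural simplifications then apply: the natural boundary conditions \eqref{natural_boundary} kill all contributions on $\partial\Omega\setminus\partial_D\Omega$; the second identity in \eqref{natural_interface}, $\partial_\eta\nabla u=0$ on $\If$, kills the $\dcurl{\partial_\eta\nabla u}$ contribution on $\Ifh$ (explaining the restriction to $\Gh\setminus\Ifh$ in \eqref{bilinearform}); and the piecewise $H^4$-regularity together with the third identity in \eqref{natural_interface} makes $\dsquare{\partial_\eta\nabla u}$ and $\dsquare{\partial_\eta\Delta u}$ vanish on all interior edges, leaving only the mean-value terms.

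To reach the precise form of $a_h(u,v_h)$ in \eqref{bilinearform} one finally adds the symmetry-restoring terms involving $\dsquare{\nabla u}$ and $\dsquare{u}$, together with the two penalty contributions. Each of these vanishes for the exact solution: $\dsquare{u}=0$ on $\Gh$ follows from $u\in H^1(\Omega)$ on interior edges and, on $\Shb$, from the convention $\dsquare{u}|_S=g-u$ in \eqref{discrete_space_boundary} combined with $u=g$ on $\partial_D\Omega$; analogously, $\dsquare{\nabla u}=0$ on $\Gh\setminus\Ifh$ follows from piecewise $H^2$-regularity across interior edges off the fold and, on $\Shb$, from $\dsquare{\nabla u}|_S=\Phi-\nabla u$ together with $\nabla u=\Phi$ on $\partial_D\Omega$. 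I expect the main obstacle to be the bookkeeping of the jump/average conventions at the interface versus the Dirichlet boundary — in particular, keeping straight that $\dsquare{\partial_\eta\nabla u}$ need not vanish on $\Ifh$ whereas $\dsquare{\partial_\eta\Delta u}$ does, which accounts for the different edge sets appearing in the two groups of average terms in \eqref{bilinearform}.
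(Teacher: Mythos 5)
Your proposal is correct and takes essentially the same route as the paper: establish consistency, $a_h(u,v_h)=\langle f,v_h\rangle_{L^2(\Omega)}$, by retracing the elementwise integration-by-parts derivation of \eqref{bilinearform} and invoking \eqref{natural_boundary}, \eqref{natural_interface} and the boundary-jump conventions, then subtract the discrete equation \eqref{dg_equation}. One small slip in your closing remark: on $\Ifh$ the quantity that need not vanish is $\dsquare{\nabla u}$ (the fold itself), not $\dsquare{\partial_\eta\nabla u}$ --- the latter is zero there because $\partial_\eta\nabla u=0$ holds from both sides of $\If$, and it is the possible nonvanishing of $\dsquare{\nabla u}$ that forces the symmetrizing and penalty terms involving gradient jumps to be restricted to $\Gamma_h\setminus\Ifh$, exactly as your main argument already (correctly) uses.
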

\begin{proof}
The assumed regularity and the interface conditions \eqref{natural_interface} provide $\partial_\eta\nabla u = 0$ on $\Ifh$, $\dsquare{u}=\dsquare{\partial_\eta\nabla u}=\dsquare{\partial_\eta\Delta u}=0$ on $\Gamma_h$ and $\dsquare{\nabla u}=0$ on $\Gamma_h\setminus\Ifh$, which yields $a_h(u,v_h)=\langle f,v_h\rangle_{L^2(\Omega)}$ for all $v_h\in\mathbb{V}_h^k(0,0)$.
\end{proof}
\begin{rmk}
For unfitted meshes a geometric consistency error enters the formulation. In the case of a piecewise polynomial approximation of the interface, the additional term can be controlled separately, see Section \ref{sec:if_approx}.
\end{rmk}

In what follows we consider homogeneous boundary data $(g,\Phi)=(0,0)$, which is justified below in the proof of \thref{coercivity}. The subindex $h$ of the elementwise applied differential operators is dropped for ease of notation.
Since the bilinear form $a_h$ is symmetric we define the dG-norm as
\begin{align*}\label{dg_norm}
	||v_h||_{\text{dG}}:=\Biggl(\sum_{T\in\mathcal{T}_h}||D^2 v_h||_{L^2(T)}^2+\sum_{S\in\mathcal{S}_h\setminus\Shif}\frac{\gamma_1}{h_S}||\dsquare{\nabla v_h}||^2_{L^2(S)}+\sum_{S\in\mathcal{S}_h}\frac{\gamma_0}{h_S^3}||\dsquare{v_h}||_{L^2(S)}^2\Biggr)^{1/2}\,,
\end{align*}
which is a norm on $\mathbb{V}_h^k$ for any $\gamma_0,\gamma_1>0$, provided that $\Ghb\cap(\partial\Omega_i/\partial\If)\neq\emptyset$ for all $i\in\{1,2\}$.
The following inverse estimates \cite[Lemma 9.1]{Bonito_Bending} ensure boundedness and coercivity of the bilinear form $a_h$ with respect to $||\,.\,||_{\text{dG}}$ on $\mathbb{V}_h^k$.

\begin{lem}[Inverse estimates]\thlabel{inverse_estimates}
For $T\in\mathcal{T}_h$ and an edge $S\in\mathcal{S}_h$ of $T$, we have for every $v_h\in\mathbb{V}_h^k$ the estimates
\begin{align}
||D^2v_h||_{L^2(S)}\leq c_1 h_S^{-1/2}||D^2v_h||_{L^2(T)}\,,\quad ||D^3v_h||_{L^2(S)}\leq c_0 h_S^{-3/2}||D^2v_h||_{L^2(T)}\,.
\end{align}
\begin{proof}
See \cite[Lemma 9.1]{Bonito_Bending}.
\end{proof}
\end{lem}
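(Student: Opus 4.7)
The plan is standard: pull back to the reference triangle $\widehat T$ via the isoparametric diffeomorphism $\psi_T$, reduce to finite-dimensional norm equivalence on $\mathbb{P}_k(\widehat T)$, and use the $h_T$-scalings $\|D^m\psi_T\|_{L^\infty(\widehat T)}\lesssim h_T^m$ and $\|D^m\psi_T^{-1}\|_{L^\infty(T)}\lesssim h_T^{-m}$ recalled earlier in this section to transfer the estimate between physical and reference scales.

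Given $v_h\in\mathbb{V}_h^k$ and $T=\psi_T(\widehat T)$, I would set $\widehat v:=v_h\circ\psi_T\in\mathbb{P}_k(\widehat T)$. Applying the chain rule to $v_h=\widehat v\circ\psi_T^{-1}$ and invoking the Lenoir bounds gives the schematic estimates
\[
|D^2 v_h|\lesssim h_T^{-2}\bigl(|D^2\widehat v|+|D\widehat v|\bigr),\qquad |D^3 v_h|\lesssim h_T^{-3}\bigl(|D^3\widehat v|+|D^2\widehat v|+|D\widehat v|\bigr),
\]
together with the analogous expansion of $|D^2\widehat v|$ in terms of $|D^2 v_h|$ and $|Dv_h|$. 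Integrating over $S$ (respectively $T$) and changing variables using the surface Jacobian of order $h_T$ (respectively the volume Jacobian of order $h_T^2$) then writes both sides of the target inequality as weighted sums of reference-element seminorms $\|D^l\widehat v\|_{L^2(\widehat T)}$ for $l\in\{1,2,3\}$.

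On the reference element one then invokes the trace inequality $\|\widehat w\|_{L^2(\widehat S)}\lesssim\|\widehat w\|_{H^1(\widehat T)}$ applied to each component of $D^l\widehat v$, combined with finite-dimensional norm equivalence on $\mathbb{P}_k(\widehat T)$ modulo $\mathbb{P}_1(\widehat T)$, to absorb the seminorms $\|D^l\widehat v\|_{L^2(\widehat T)}$ for $l\in\{1,2,3\}$ into $\|D^2\widehat v\|_{L^2(\widehat T)}$. The preliminary quotienting is legitimate because both sides of the target estimate are invariant under the substitution $v_h\mapsto v_h+\alpha_0+\alpha_1\cdot x$ for any $\alpha_0\in\mathbb{R}$ and $\alpha_1\in\mathbb{R}^2$, so one may subtract the physical-affine best fit of $v_h$ on $T$ prior to pulling back and thereby normalize $\widehat v$ appropriately.

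The main obstacle is the bookkeeping of the higher-order chain rule: on curved elements the expansion of $D^3 v_h$ genuinely contains $|D^2\psi_T^{-1}|$ and $|D^3\psi_T^{-1}|$ factors multiplying the lower reference derivatives $|D^2\widehat v|$ and $|D\widehat v|$, and one must verify that the Lenoir scalings furnish precisely the powers of $h_T$ needed to recombine these residues with the leading term. For $k=1$ the second and higher derivatives of $\psi_T$ vanish and the argument reduces to the classical affine inverse/trace estimate; in the general isoparametric case the detailed line-by-line computation is carried out in \cite{Bonito_Bending} cited in the statement.
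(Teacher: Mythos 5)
Your argument is correct and is exactly the standard scaling proof that the paper itself does not spell out (its ``proof'' is only a citation of \cite[Lemma 9.1]{Bonito_Bending}): pull back by $\psi_T$, use the Lenoir bounds $\|D^m\psi_T\|_{L^\infty(\widehat T)}\lesssim h_T^m$, $\|D^m\psi_T^{-1}\|_{L^\infty(T)}\lesssim h_T^{-m}$, apply the reference trace inequality and finite-dimensional equivalences, and exploit invariance of both sides under $v_h\mapsto v_h+\alpha_0+\alpha_1\cdot x$ (note that physical affine functions do pull back into $\mathbb{P}_k(\widehat T)$ since $\psi_T\in[\mathbb{P}_k(\widehat T)]^2$, so the subtraction stays inside $\mathbb{V}_h^k$). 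One small point of hygiene: after subtracting the physical-affine best fit, the quotient you are implicitly working in is by the $\psi_T$-pullback of physical affine functions, which is \emph{not} $\mathbb{P}_1(\widehat T)$ for $k\ge 2$; but this does not matter, because the lower-order reference seminorms $\|D^l\widehat v\|_{L^2(\widehat T)}$, $l=1,2$, are controlled directly in physical variables via the Poincar\'e inequality ($\|\nabla v_h\|_{L^2(T)}\lesssim h_T\|D^2v_h\|_{L^2(T)}$ after normalizing the mean of $\nabla v_h$), and the only genuinely reference-side equivalence needed is $\|D^3\widehat v\|_{L^2(\widehat T)}\lesssim\|D\widehat v\|_{L^2(\widehat T)}+\|D^2\widehat v\|_{L^2(\widehat T)}$ on $\mathbb{P}_k(\widehat T)$, which holds since both sides vanish only on constants. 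With that reading, your bookkeeping of the powers of $h_T$ (surface Jacobian $\sim h_T$, volume Jacobian $\sim h_T^2$, $|D^2v_h|\lesssim h_T^{-2}(|D\widehat v|+|D^2\widehat v|)$, $|D^3v_h|\lesssim h_T^{-3}\sum_{l\le3}|D^l\widehat v|$) closes to the stated $h_S^{-1/2}$ and $h_S^{-3/2}$ rates, using $h_S\sim h_T$ from shape regularity.
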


The inverse estimates allow for a bound of the jump-average terms by the remaining terms included in the bilinear form $a_h$. 

\begin{lem}[Jump-average bounds]\thlabel{jump_average_bounds}
Let $c_0$ and $c_1$ be the constants of the inverse estimates. 
(i) For $S\in\mathcal{S}_h\setminus\Shif$ and the edge patch $\omega(S)=T_+\cup T_-$ we have
\begin{align}
\int_S\dcurl{\partial_\eta\nabla v_h}\dsquare{\nabla w_h}\ds\leq\frac{c_1^2}{\gamma_1}||D^2v_h||^2_{L^2(\omega(S))}+\frac{\gamma_1}{4h_S}||\dsquare{\nabla w_h}||_{L^2(S)}^2\,.
\end{align}
(ii) For $S\in\mathcal{S}_h$ and the edge patch $\omega(S)=T_+\cup T_-$ we have
\begin{align}
\int_S\dcurl{\partial_\eta\Delta v_h}\dsquare{w_h}\ds\leq\frac{c_0^2}{\gamma_0}||D^2v_h||^2_{L^2(\omega(S))}+\frac{\gamma_0}{4h_S^3}||\dsquare{w_h}||^2_{L^2(S)}\,.
\end{align}
\begin{proof}
By Hölder's inequality and the inverse estimates of \thref{inverse_estimates}, we deduce that
\begin{align*}
\int_S\dcurl{\partial_\eta\nabla v_h}\dsquare{\nabla w_h}\ds &\leq ||\dcurl{D^2v_h}||_{L^2(S)}||\dsquare{\nabla w_h}||_{L^2(S)}\\
&\leq c_1h_S^{-1/2}||D^2v_h||_{L^2(\omega(S))}||\dsquare{\nabla w_h}||_{L^2(S)}\,.
\end{align*}
Similar arguments apply to the left-hand side of the second item. Young's inequality $ab\leq a^2+b^2/4$ yields the desired estimates.
\end{proof}
\end{lem}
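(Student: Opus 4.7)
The plan is to combine three standard ingredients: Cauchy--Schwarz on $S$ to decouple the average from the jump, the inverse trace estimates of \thref{inverse_estimates} to push derivatives of $v_h$ from the edge $S$ back onto the two neighboring elements, and finally a weighted Young inequality tuned so that the jump term inherits exactly the prescribed weight $\gamma_1/(4h_S)$ or $\gamma_0/(4h_S^3)$.

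For part (i), I first use $|\partial_\eta \nabla v_h| \leq |D^2 v_h|$ together with Cauchy--Schwarz to obtain
\[
\int_S \dcurl{\partial_\eta \nabla v_h}\,\dsquare{\nabla w_h}\,\ds \;\leq\; \|\dcurl{D^2 v_h}\|_{L^2(S)}\,\|\dsquare{\nabla w_h}\|_{L^2(S)}.
\]
The average on $S$ is controlled by the two one-sided traces $D^2 v_h|_{T_\pm}$ via the triangle inequality, and each of these traces is bounded by $c_1 h_S^{-1/2}\|D^2 v_h\|_{L^2(T_\pm)}$ thanks to the first estimate of \thref{inverse_estimates}; combining both contributions (absorbing the $1/2$ from averaging into the inequality $(a+b)^2 \leq 2(a^2+b^2)$) yields
\[
\|\dcurl{D^2 v_h}\|_{L^2(S)} \;\leq\; c_1 h_S^{-1/2}\|D^2 v_h\|_{L^2(\omega(S))}.
\]
I then apply the Young inequality in the form $\alpha\beta \leq \alpha^2 + \beta^2/4$ with the splitting $\alpha = (c_1/\sqrt{\gamma_1})\|D^2 v_h\|_{L^2(\omega(S))}$ and $\beta = \sqrt{\gamma_1/h_S}\,\|\dsquare{\nabla w_h}\|_{L^2(S)}$, chosen precisely so that $\alpha\beta$ reproduces the Cauchy--Schwarz bound above; the resulting $\alpha^2$ and $\beta^2/4$ are exactly the two terms appearing on the right-hand side of the claim.

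Part (ii) follows the same template, with $D^3 v_h$ in place of $D^2 v_h$ in the average, the second inverse estimate of \thref{inverse_estimates} providing the factor $c_0 h_S^{-3/2}$, and the matched Young splitting $\alpha = (c_0/\sqrt{\gamma_0})\|D^2 v_h\|_{L^2(\omega(S))}$, $\beta = \sqrt{\gamma_0/h_S^3}\,\|\dsquare{w_h}\|_{L^2(S)}$. There is no serious obstacle in either part; the argument is essentially bookkeeping of the Young parameters. The only minor care concerns boundary edges $S \in \Shb$, where $\omega(S)$ consists of a single element and the convention $\dcurl{v}|_S := v|_S$ introduced earlier makes the one-sided trace estimate apply verbatim, with the same constants $c_0$, $c_1$.
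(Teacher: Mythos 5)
Your proof is correct and follows essentially the same route as the paper's: Cauchy--Schwarz on $S$, the inverse estimates of \thref{inverse_estimates} to bound the trace of $\dcurl{D^2v_h}$ (resp.\ $\dcurl{D^3v_h}$) by element norms on $\omega(S)$, and Young's inequality $ab\leq a^2+b^2/4$ with the weights matched to $\gamma_1/h_S$ and $\gamma_0/h_S^3$. You merely spell out the details the paper leaves implicit (the triangle inequality for the average, the $(a+b)^2\leq 2(a^2+b^2)$ bookkeeping, and the boundary-edge convention), all of which check out.
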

 
With the jump-average bounds we conclude the coercivity and boundedness of the bilinear form $a_h$ with respect to $||\,.\,||_{\text{dG}}$ on $\mathbb{V}_h^k$. In particular, the discontinuous Galerkin method admits a unique solution $u_h\in\mathbb{V}_h^k(g,\Phi)$.

\begin{prop}[Existence and uniqueness]\thlabel{coercivity}
If $\gamma_0,\gamma_1$ are sufficiently large we have:

(i) The bilinear form $a_h$ is coercive, in the sense that
\begin{align*}
a_h(v_h,v_h)\geq \frac{1}{2}||v_h||_{\text{dG}}^2\quad\forall v_h\in\mathbb{V}_h^k\,.
\end{align*}
(ii) The bilinear form $a_h$ is continuous, i.e.,
\begin{align*}
a_h(v_h,w_h)\leq c||v_h||_{\text{dG}}||w_h||_{\text{dG}}\quad\forall v_h,w_h\in\mathbb{V}_h^k\,.
\end{align*}
(iii) The discontinuous Galerkin method admits a unique solution $u_h\in\mathbb{V}_h^k(g,\Phi)$, such that
\begin{align*}
a_h(u_h,v_h)=\ell_h(v_h) \quad\forall v_h\in\mathbb{V}_h^k\,.
\end{align*}
\begin{proof}
(i) Summing the jump-average bounds of \thref{jump_average_bounds} over all elements $T\in\mathcal{T}_h$ yields
\begin{align*}
2\sum_{S\in\mathcal{S}_h\setminus\Shif}\int_S\dcurl{\partial_\eta\nabla v_h}\dsquare{\nabla v_h}\ds\geq-\frac{6c_1^2}{\gamma_1}\sum_{T\in\mathcal{T}_h}||D^2v_h||^2_{L^2(T)}-\frac{1}{2}\sum_{S\in\mathcal{S}_h\setminus\Shif}\frac{\gamma_1}{h_S}||\dsquare{\nabla v_h}||_{L^2(S)}^2\,,
\end{align*}
where we used that each element has three sides. Analogously, we have
\begin{align*}
2\sum_{S\in\mathcal{S}_h}\int_S\dcurl{\partial_\eta\Delta v_h}\dsquare{v_h}\ds\geq-\frac{6c_0^2}{\gamma_0}\sum_{T\in\mathcal{T}_h}||D^2v_h||^2_{L^2(T)}-\frac{1}{2}\sum_{S\in\mathcal{S}_h}\frac{\gamma_0}{h_S^3}||\dsquare{v_h}||_{L^2(S)}^2\,.
\end{align*}
Finally, if $\gamma_0$ and $\gamma_1$ are sufficiently large such that $6c_0^2/\gamma_0\leq1/2$ and $6c_1^2/\gamma_1\leq1/2$, the jump-average terms can be absorbed and we conclude that
\begin{align*}
a_h(v_h,v_h)&\geq\frac{1}{2}\Biggl(\sum_{T\in\mathcal{T}_h}||D^2 v_h||_{L^2(T)}^2+\sum_{S\in\mathcal{S}_h\setminus\Shif}\frac{\gamma_1}{h_S}||\dsquare{\nabla v_h}||^2_{L^2(S)}+\sum_{S\in\mathcal{S}_h}\frac{\gamma_0}{h_S^3}||\dsquare{v_h}||_{L^2(S)}^2\Biggr)\\
&=\frac{1}{2}||v_h||_{\text{dG}}^2\,.
\end{align*}
(ii) The boundedness of $a_h$ follows by the inverse estimates of \thref{inverse_estimates}.\\
(iii) The Lax--Milgram lemma provides the existence of a unique solution for homogeneous boundary data $(g,\Phi)=(0,0)$. For non-homogeneous data $(g,\Phi)$, we require compatible boundary conditions, in the sense that $\Phi = \nabla g|_{\partial_D\Omega}$. As a result, we may assume that $u=\widetilde{u}+g$ and $\nabla u = \nabla \widetilde{u}+\Phi$, where $\widetilde{u}$ and $\nabla\widetilde{u}$ satisfy homogeneous boundary conditions. The Lax--Milgram lemma provides the existence of a unique solution $\widetilde{u}$ to a problem with a modified linear functional $\widetilde{\ell}_h$. In particular, problem \eqref{dg_equation} is well-posed.
\end{proof}
\end{prop}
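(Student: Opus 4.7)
The plan is to establish (i) coercivity and (ii) continuity of $a_h$ directly from the jump-average bounds of \thref{jump_average_bounds}, and then deduce (iii) existence and uniqueness via the Lax--Milgram lemma on the finite-dimensional Hilbert space $(\mathbb{V}_h^k,\|\cdot\|_{\text{dG}})$, after reducing non-homogeneous boundary data to the homogeneous case.

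For coercivity, I would test $a_h(v_h,v_h)$ and use symmetry to collapse the two consistency pairings into $2\langle\dcurl{\partial_\eta\nabla v_h},\dsquare{\nabla v_h}\rangle_{L^2(\Gh\setminus\Ifh)}$ and $-2\langle\dcurl{\partial_\eta\Delta v_h},\dsquare{v_h}\rangle_{L^2(\Gh)}$. Applying \thref{jump_average_bounds} edge-by-edge produces contributions of the form $(c_i^2/\gamma_i)\|D^2 v_h\|_{L^2(\omega(S))}^2$ on the Hessian side and a quarter of the corresponding penalty on the jump side. Since each triangle has three edges, summing the patch norms $\|D^2 v_h\|_{L^2(\omega(S))}^2$ over $S\in\Sh$ counts each element at most three times; together with the factor $2$ from symmetry, this is the origin of the constants $6c_1^2/\gamma_1$ and $6c_0^2/\gamma_0$ that multiply the full Hessian term. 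Choosing $\gamma_0,\gamma_1$ so that $12c_i^2\leq\gamma_i$ absorbs these into half of the Hessian contribution, while the $1/4$ factors absorb into half of each penalty, yielding $a_h(v_h,v_h)\geq\tfrac12\|v_h\|_{\text{dG}}^2$. A small but important consistency check is that edges in $\Shif$ carry neither a $\nabla$-jump penalty nor a corresponding consistency term, so the sums over $\Gh\setminus\Ifh$ and $\Gh$ match the two separate contributions of $\|\cdot\|_{\text{dG}}$.

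For continuity, the strategy is to apply Cauchy--Schwarz to each of the six terms in $a_h(v_h,w_h)$ and use the inverse estimates of \thref{inverse_estimates} to bound the average terms by $h_S^{-1/2}\|D^2 v_h\|_{L^2(\omega(S))}$ and $h_S^{-3/2}\|D^2 v_h\|_{L^2(\omega(S))}$ respectively. Redistributing powers of $h_S$ and $\gamma_i$ between the two factors matches the Hessian part of $\|v_h\|_{\text{dG}}$ with the penalty part of $\|w_h\|_{\text{dG}}$ at each edge, so that summation and a discrete Cauchy--Schwarz inequality give the bound $\|v_h\|_{\text{dG}}\|w_h\|_{\text{dG}}$ up to a constant depending only on $c_0,c_1,\gamma_0,\gamma_1$, not on $h$.

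For (iii), since $\mathbb{V}_h^k$ is finite-dimensional, $(\mathbb{V}_h^k,\|\cdot\|_{\text{dG}})$ is a Hilbert space on which $a_h$ is bounded and coercive, so Lax--Milgram applies directly when $(g,\Phi)=(0,0)$. For the general case I would invoke the compatibility assumption $\Phi=\nabla g|_{\partial_D\Omega}$, choose a discrete lift $g_h$ matching the boundary data in the sense of \eqref{discrete_space_boundary}, and solve the resulting homogeneous problem for $\widetilde u_h:=u_h-g_h$ with modified right-hand side $v_h\mapsto\ell_h(v_h)-a_h(g_h,v_h)$. The main technical obstacle is the constant tracking in (i): one has to verify that the edge-multiplicity factor $3$ and the symmetry factor $2$ combine in a way that still leaves both penalty terms and the Hessian term with strictly positive coefficients for a \emph{single} pair $(\gamma_0,\gamma_1)$, which is why the jump-average bounds were phrased with the sharp $1/4$ constant rather than an unspecified one.
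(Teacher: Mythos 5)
Your proposal is correct and follows essentially the same route as the paper: the coercivity constant $6c_i^2/\gamma_i$ from the three-edges-per-triangle count combined with the symmetry factor $2$, the threshold $\gamma_i\geq 12c_i^2$, continuity via Cauchy--Schwarz and the inverse estimates, and Lax--Milgram with a lifting of compatible boundary data for (iii) all match the paper's argument. The only difference is cosmetic: you spell out the continuity estimate and use a discrete lift $g_h$ where the paper works with the continuous data $g$ directly, but the substance is identical.
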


To simplify the a priori error analysis, we include the linear terms involving the boundary data in the right hand side of equation $\eqref{dg_equation}$ to arrive at the equivalent formulation: Find $u_h\in\mathbb{V}_h^k(0,0)=\mathbb{V}_h^k$ such that
\begin{align}\label{dg_equation_mod}
a_h(u_h,v_h)=\ell_h(v_h) \quad\forall v_h\in\mathbb{V}_h^k\,,
\end{align}
with the linear form
\begin{align}\label{linearform}
\begin{split}
\ell_h(v_h) &:= \langle f,v_h\rangle_{L^2(\Omega)}-\langle \partial_\eta \nabla_h v_h,\Phi\rangle_{L^2(\Ghb)}+\langle \partial_\eta \Delta_h v_h,g\rangle_{L^2(\Ghb)}\\
&\,\quad+\gamma_1\langle h^{-1}\Phi,\nabla_h v_h\rangle_{L^2(\Ghb)}+\gamma_0\langle h^{-3}g,v_h\rangle_{L^2(\Ghb)}\,,
\end{split}
\end{align}
where we used the convention $\dsquare{v}|_S=-v|_S$ for $S\in\Shb$.

\begin{rmk}[Local discontinuous Galerkin method]
We note that by following ideas of \cite{bonito2020ldg}, we may introduce a reconstructed Hessian $H_h$ to obtain a discontinuous Galerkin method that is well posed for arbitrarily small penalty parameters $\gamma_0$ and $\gamma_1$. The operator $H_h$ is defined by combining the broken Hessian $D^2_h v_h$ with globalized liftings of both the jump of the broken gradient $[\nabla_hv_h]$ and the jump of the deformation $[v_h]$. This leads to additional technical difficulties in the error estimate. 
\end{rmk}

\section{A Priori Error Estimate}\label{sec:error_estimate}
To prove an interpolation estimate on curved triangles, we transform integrals back to a reference element, on which approximation results like the Bramble--Hilbert lemma can directly be applied. By similar transformations to reference sides (on which classical trace estimates apply), the result can then be extended to an interpolation estimate on curved sides. The following inequality from \textsc{Ern} and \textsc{Guermond} \cite[Lemma 13.5]{ern_guermond}
\begin{align}\label{iso_estimate_ern_guermond}
||D^m\psi_T^{-1}||_{L^\infty(T)}\lesssim ||D\psi_T^{-1}||^m_{L^\infty(T)} \quad\forall\,2\leq m\leq k+1\,,
\end{align}
ensures that appearing derivatives of the isoparametric mappings can be bounded by $||D^m\psi_T^{-1}||_{L^\infty(T)}\lesssim h_T^{-m}$ via the shape-regularity assumption. Repeated application of the chain and product rules then imply
\begin{align}\label{iso_estimate_elements_inv}
||D^m v||_{L^2(T)}\lesssim h_T^{1-m} \sum_{j=1}^m||D^j \widehat{v}||_{L^2(\widehat{T})}\,.
\end{align}
Owing to \cite[Proposition 1]{Lenoir}, the map $\psi_T$ can be chosen such that $||D^m\psi_T||_{L^\infty(\widehat{T})}\lesssim h_T^{m}$ for $2\leq m\leq k+1$, which leads to the estimate
\begin{align}\label{iso_estimate_elements}
||D^m \widehat{v}||_{L^2(\widehat{T})}\lesssim h_T^{m-1}\sum_{j=1}^m||D^j v||_{L^2(T)}\lesssim h_T^{m-1} ||v||_{H^m(T)}\,.
\end{align}

\begin{rmk}
In contrast to usual estimates for affine elemental mappings $\psi_T\in[\mathbb{P}_1(\widehat{T})]^2$ with $H^m$-seminorm on the right hand side, we have the full $H^m$-norm due to non-vanishing higher derivatives $D^m\psi_T \neq 0$ for $m\geq2$. This also affects the a priori error estimate derived below in \thref{error_estimate_thm}.
\end{rmk}

Let $\widehat{\mathcal{I}}:C^0(\widehat{T})\to\mathbb{V}_h^k(\widehat{T})$ denote the Lagrange interpolation operator of degree $k\geq1$ over $\widehat{T}$ and define the induced Lagrange interpolation operator $\mathcal{I}_h:C^0(T)\to\mathbb{V}_h^k(T)$ via
\begin{align}
\mathcal{I}_hv:=\widehat{\mathcal{I}}(v\circ\psi_T)\circ\psi_T^{-1}=\widehat{\mathcal{I}}\widehat{v}\circ\psi_T^{-1}\,.
\end{align} 
We utilize the following interpolation estimate \cite[Lemma 9.4]{Bonito_Bending}, which is originally formulated for curved quadrilaterals but applies to curved triangles analogously.

\begin{lem}[Interpolation estimate on curved triangles]\thlabel{interpolation_curved_elements}
Let $T\in\mathcal{T}_h$ be given by the image of an isoparametric mapping $\psi_T\in[\mathbb{P}_k(\widehat{T})]^2$ that satisfies
\begin{align*}
||D^m\psi_T||_{L^\infty(\widehat{T})}\lesssim h_T^{m}\quad \forall\,2\leq m\leq k+1\,.
\end{align*}
For $v\in H^{k+1}(T)$ and the (induced) Lagrange interpolant $\mathcal{I}_hv\in\mathbb{V}_h^k(T)$ we have
\begin{align}
h_T^{m-2}|v-\mathcal{I}_hv|_{H^m(T)}\lesssim h_T^{k-1}||v||_{H^{k+1}(T)}\quad\forall\,0\leq m\leq k+1\,.
\end{align}
\end{lem}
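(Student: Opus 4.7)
My plan is a standard reference-element argument, adapted to the isoparametric setting via the scaling estimates \eqref{iso_estimate_elements_inv} and \eqref{iso_estimate_elements}. Since $\mathcal{I}_hv=\widehat{\mathcal I}\widehat v\circ\psi_T^{-1}$ by definition, the pull-back of the interpolation error is exactly $\widehat v-\widehat{\mathcal I}\widehat v$, so I may perform Bramble--Hilbert on $\widehat T$ and then transfer back to $T$.

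For $m\geq 1$, applying \eqref{iso_estimate_elements_inv} to $v-\mathcal I_h v$ gives
\begin{align*}
|v-\mathcal I_h v|_{H^m(T)}\lesssim h_T^{1-m}\sum_{j=1}^m\|D^j(\widehat v-\widehat{\mathcal I}\widehat v)\|_{L^2(\widehat T)}.
\end{align*}
Since $\widehat{\mathcal I}$ is a bounded projection on the fixed reference element $\widehat T$ (the Sobolev embedding $H^{k+1}(\widehat T)\hookrightarrow C^0(\widehat T)$ holds for $k\geq 1$) with range $\mathbb P_k(\widehat T)$, the Bramble--Hilbert lemma yields
\begin{align*}
\|D^j(\widehat v-\widehat{\mathcal I}\widehat v)\|_{L^2(\widehat T)}\lesssim |\widehat v|_{H^{k+1}(\widehat T)}\quad\forall\,0\leq j\leq k+1.
\end{align*}
Finally, \eqref{iso_estimate_elements} with $m=k+1$ translates this reference seminorm back to $T$ via $|\widehat v|_{H^{k+1}(\widehat T)}\lesssim h_T^k\|v\|_{H^{k+1}(T)}$. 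Chaining the three inequalities produces $|v-\mathcal I_h v|_{H^m(T)}\lesssim h_T^{k+1-m}\|v\|_{H^{k+1}(T)}$, which is the claim after multiplying by $h_T^{m-2}$.

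For the remaining case $m=0$ the scaling \eqref{iso_estimate_elements_inv} is not directly applicable. I would instead invoke the shape-regularity bound $|\det D\psi_T|\lesssim h_T^2$ in a direct change of variables to obtain $\|v-\mathcal I_h v\|_{L^2(T)}\lesssim h_T\,\|\widehat v-\widehat{\mathcal I}\widehat v\|_{L^2(\widehat T)}$, after which the same Bramble--Hilbert plus \eqref{iso_estimate_elements} chain yields $\|v-\mathcal I_h v\|_{L^2(T)}\lesssim h_T^{k+1}\|v\|_{H^{k+1}(T)}$.

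The only nuance is the sum on the right of \eqref{iso_estimate_elements_inv}, which contains intermediate-order derivatives that an affine change of variables would not produce: one has to observe that Bramble--Hilbert bounds each of them \emph{uniformly} by the single top seminorm $|\widehat v|_{H^{k+1}(\widehat T)}$ at no extra cost in $h_T$, so that the optimal scaling $h_T^{k+1-m}$ survives. As noted in the remark preceding the lemma, it is precisely the companion estimate \eqref{iso_estimate_elements} that forces the right-hand side of the claim to be the full $H^{k+1}$-norm rather than the seminorm.
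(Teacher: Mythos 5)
Your argument is correct: pulling the error back to $\widehat{T}$ via \eqref{iso_estimate_elements_inv}, applying Bramble--Hilbert to $\widehat{v}-\widehat{\mathcal{I}}\widehat{v}$ (noting $D^{k+1}\widehat{\mathcal{I}}\widehat{v}=0$ for the top case $j=k+1$), and returning with \eqref{iso_estimate_elements} gives exactly the claimed scaling, and your separate treatment of $m=0$ and your observation that the intermediate derivatives in the sum are all absorbed into $|\widehat{v}|_{H^{k+1}(\widehat{T})}$ close the only delicate points. The paper itself does not prove this lemma but imports it from \cite[Lemma 9.4]{Bonito_Bending}; your proof is the standard reference-element argument that result rests on, and it is fully consistent with the scaling estimates \eqref{iso_estimate_elements_inv}--\eqref{iso_estimate_elements} that the paper sets up precisely for this purpose.
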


With the help of \thref{interpolation_curved_elements} we prove an interpolation estimate on curved sides. 

\begin{lem}[Interpolation estimate on curved sides]\thlabel{interpolation_curved_sides}
Let the assumptions of \thref{interpolation_curved_elements} be satisfied. For a side $S\in\mathcal{S}_h$ and an element $T\in\{T_+,T_-\}$ adjacent to $S$ we have
\begin{align}
h_{S}^{m-3/2}|v-\mathcal{I}_hv|_{H^m(S)}\lesssim h_{T}^{k-1}||v||_{H^{k+1}(T)}\quad{\forall\,0\leq m\leq k}\,,
\end{align}
where the $H^m(S)$-norm is defined via transformation to a flat domain.

\begin{proof}
For transformations to reference sides we make use of Nanson's formula \cite{nanson}, which states that the ratio between the measures on $\widehat{S}$ and $S$ is given by $\text{det}(D\psi_T)|D\psi_T^{-T}\eta_{\widehat{S}}|$. Using the trace inequality on reference sides with diameter $h_{\widehat{S}}$ gives
\begin{align}\label{trace_ref}
	||\widehat{v}||_{L^2(\widehat{S})}^2\lesssim||\widehat{v}||_{L^2(\widehat{T})}^2+||\nabla\widehat{v}||_{L^2(\widehat{T})}^2\,.
\end{align}
We now extend the classical trace estimate to the isoparametric case. Combining the above results, a transformation shows that
\begin{align}\label{iso_estimate_sides}
\begin{split}
||v||_{L^2(S)}^2&=\int_{\widehat{S}}\widehat{v}^2\det(D\psi_T)|D\psi_T^{-T}\eta_{\widehat{S}}|\dshat
\lesssim h_T\,||\widehat{v}||_{L^2(\widehat{S})}^2\\
&\stackrel{\mathclap{\eqref{trace_ref}}}{\lesssim}\, h_T\,||\widehat{v}||_{L^2(\widehat{T})}^2+h_T\,||\nabla\widehat{v}||_{L^2(\widehat{T})}^2
\,\stackrel{\mathclap{\eqref{iso_estimate_elements}}}{\lesssim}\, h_T^{-1}\,||v||_{L^2(T)}^2+h_T\,||\nabla v||_{L^2(T)}^2\,,
\end{split}
\end{align}
where $||\widehat{v}||_{L^2(\widehat{T})}^2\lesssim h_T^{-2}\,||v||_{L^2(T)}^2$.
Derivatives are estimated in a similar fashion via
\begin{align}\label{iso_estimate_sides_derivatives}
\begin{split}
||D^m v||_{L^2(S)}^2&\lesssim h_T^{1-2m} \sum_{j=1}^m||D^j \widehat{v}||_{L^2(\widehat{S})}^2\\
&\stackrel{\mathclap{\eqref{trace_ref}}}{\lesssim}\, h_T^{1-2m}\sum_{j=1}^{m+1}||D^j \widehat{v}||_{L^2(\widehat{T})}^2
\,\stackrel{\mathclap{\eqref{iso_estimate_elements}}}{\lesssim}\,\, \sum_{j=1}^{m+1}h_T^{2j-2m-1}\sum_{i=1}^j||D^i v||_{L^2(T)}^2\,.
\end{split}
\end{align}
With \thref{interpolation_curved_elements} we conclude
\begin{align*}
h_S^{-3}||v-\mathcal{I}_hv||_{L^2(S)}^2&\stackrel{\mathclap{\eqref{iso_estimate_sides}}}{\lesssim}\, h_T^{-4}\,||v-\mathcal{I}_hv||_{L^2(T)}^2+h_T^{-2}\,||\nabla(v-\mathcal{I}_hv)||_{L^2(T)}^2\\
&\lesssim h_T^{2k-2}||v||_{H^{k+1}(T)}^2\,,
\end{align*}
which gives the assertion for $m=0$. Analogously, we have for the derivatives
\begin{align*}
h_S^{2m-3}||D^m(v-\mathcal{I}_hv)||_{L^2(S)}^2&\stackrel{\mathclap{\eqref{iso_estimate_sides_derivatives}}}{\lesssim}\,\, \sum_{j=1}^{m+1}h_T^{2j-4}\sum_{i=1}^j||D^i (v-\mathcal{I}_hv)||_{L^2(T)}^2\\
&\lesssim \,\, \sum_{j=1}^{m+1}h_T^{2j-4}\sum_{i=1}^j h_T^{2k-2i+2}||v||_{H^{k+1}(T)}^2\,,
\end{align*}
for every $m\leq k$.
We use the fact, that $h_T\lesssim1$ and $i\leq j$ implies $h_T^{2(j-i)}\lesssim1$. Noting that the above estimate can thus be simplified, we arrive at
\begin{align*}
h_S^{2m-3}||D^m(v-\mathcal{I}_hv)||_{L^2(S)}^2\lesssim h_T^{2k-2}||v||_{H^{k+1}(T)}^2\,,
\end{align*}
which completes the proof.
\end{proof}
\end{lem}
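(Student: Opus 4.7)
The plan is to reduce the estimate on the curved physical side $S$ to an estimate on the reference side $\widehat{S}$ via the isoparametric map $\psi_T$, where the classical trace inequality is available, and then bring things back to the physical element $T$ in order to apply the already-proved interpolation estimate on curved triangles (\thref{interpolation_curved_elements}). The key ingredient for changing variables between $S$ and $\widehat{S}$ is Nanson's formula, which produces the factor $\det(D\psi_T)|D\psi_T^{-T}\eta_{\widehat{S}}|$; by shape regularity this scales like $h_T$. So a first step is to record the isoparametric trace estimate
\[
\|v\|_{L^2(S)}^2 \lesssim h_T\,\|\widehat{v}\|_{L^2(\widehat{S})}^2,
\]
combine it with $\|\widehat{v}\|_{L^2(\widehat{S})}^2 \lesssim \|\widehat{v}\|_{L^2(\widehat{T})}^2+\|\nabla\widehat{v}\|_{L^2(\widehat{T})}^2$, and push the result back through $\psi_T$ using the bound $\|D^m\widehat{v}\|_{L^2(\widehat{T})}\lesssim h_T^{m-1}\|v\|_{H^m(T)}$ from \eqref{iso_estimate_elements}.

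For $m=0$ the plan is then straightforward: apply this trace estimate to $v-\mathcal{I}_hv$, use \thref{interpolation_curved_elements} with $m=0$ and $m=1$ on $T$, and absorb the $h_T$ factors to arrive at $h_S^{-3}\|v-\mathcal{I}_hv\|_{L^2(S)}^2 \lesssim h_T^{2k-2}\|v\|_{H^{k+1}(T)}^2$ (using $h_S\backsim h_T$). For higher-order derivatives $1\leq m\leq k$, the chain and product rules introduce derivatives of $\psi_T^{-1}$ up to order $m$; each of these contributes a factor $h_T^{-m}$ by the inverse isoparametric bound \eqref{iso_estimate_ern_guermond}. A derivative version of the trace estimate then reads schematically
\[
\|D^m v\|_{L^2(S)}^2 \lesssim \sum_{j=1}^{m+1} h_T^{2j-2m-1}\sum_{i=1}^{j}\|D^i v\|_{L^2(T)}^2,
\]
which I would derive by applying the trace inequality component-wise to $\widehat{v}$ after expanding $D^m v$ in terms of $D^j\widehat{v}$ and $D^{\ell}\psi_T^{-1}$.

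Plugging in $v-\mathcal{I}_hv$ and invoking \thref{interpolation_curved_elements} in the form $\|D^i(v-\mathcal{I}_hv)\|_{L^2(T)}^2 \lesssim h_T^{2k-2i+2}\|v\|_{H^{k+1}(T)}^2$, each summand becomes a power of $h_T$ times $\|v\|_{H^{k+1}(T)}^2$. The exponent needs to collapse to $2k-2$ uniformly in $i,j$. This is the step where one has to be careful: after the algebra the exponent is $2j-2m-1+2k-2i+2 +(2m-3) = 2k-2 + 2(j-i)$ on the side-scaled side of the inequality. Since $h_T\lesssim 1$ and $i\leq j$, all excess powers $h_T^{2(j-i)}$ are bounded by $1$, and everything collapses to the desired bound. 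The main obstacle is precisely this bookkeeping of $h_T$ powers coming from (a) Nanson's factor, (b) the inverse isoparametric derivative bounds applied through the chain rule, and (c) the interpolation estimate on $T$; once the worst-case exponent is identified as $2k-2$ and dominated using $h_T\lesssim 1$, the conclusion follows for every $0\leq m\leq k$.
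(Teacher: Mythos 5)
Your proposal follows the paper's proof essentially verbatim: the same reduction via Nanson's formula and the reference trace inequality, the same derivative-transfer estimate $\|D^m v\|_{L^2(S)}^2\lesssim\sum_{j=1}^{m+1}h_T^{2j-2m-1}\sum_{i=1}^{j}\|D^i v\|_{L^2(T)}^2$, and the same final bookkeeping identifying the excess exponent as $2(j-i)\geq 0$ and absorbing it via $h_T\lesssim 1$. The argument is correct and complete as outlined.
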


\begin{thm}[Error estimate]\thlabel{error_estimate_thm}
Let $\mathcal{T}_h$ be a mesh given by the images of isoparametric mappings $\psi_T\in[\mathbb{P}_k(\widehat{T})]^2$ that resolve the interface $\If$ and satisfy
\begin{align*}
||D^m\psi_T||_{L^\infty(\widehat{T})}\lesssim h_T^{m}\quad 2\leq m\leq k+1\,,
\end{align*}
for every $T\in\mathcal{T}_h$. Define the maximum diameter of elements in $\mathcal{T}_h$ as $h:=\max_{T\in\mathcal{T}_h}h_T$. 

(i) For $k\geq3$ and $u\in H^{k+1}(\Omega_1\cup\Omega_2)\cap H^1(\Omega)$ we have
\begin{align}\label{error_estimate}
||u-u_h||_{dG}\lesssim h^{k-1}||u||_{H^{k+1}(\Omega_1\cup\,\Omega_2)}\,.
\end{align}

(ii) For $k=2$ and $u\in H^{4}(\Omega_1\cup\Omega_2)\cap H^1(\Omega)$ we have
\begin{align}\label{error_estimate_k2}
||u-u_h||_{dG}\lesssim h||u||_{H^{4}(\Omega_1\cup\,\Omega_2)}\,.
\end{align}
\begin{proof}
(i) We first consider the case $k\geq3$. Let $\mathcal{I}_h u\in\mathbb{V}_h^k$ be the Lagrange interpolant of $u$, which is well-defined since $H^2(T)\subset C^0(T)$. By the triangle inequality we have
\begin{align*}
||u-u_h||_{\text{dG}} \leq ||u-\mathcal{I}_h u||_{\text{dG}} + ||\mathcal{I}_h u-u_h||_{\text{dG}} 
\leq ||e_I||_{\text{dG}} + ||d_h||_{\text{dG}} \,,
\end{align*}
where $e_I = u-\mathcal{I}_h u$ and $d_h = u_h-\mathcal{I}_h u$. To estimate the dG-norm of $e_I$ we use \thref{interpolation_curved_elements,interpolation_curved_sides} to conclude
\begin{align*}
||e_I||_{\text{dG}}^2 &= \sum_{T\in\mathcal{T}_h}||D^2 e_I||_{L^2(T)}^2+\sum_{S\in\mathcal{S}_h\setminus\Shif}\frac{\gamma_1}{h_S}||\dsquare{\nabla e_I}||^2_{L^2(S)}+\sum_{S\in\mathcal{S}_h}\frac{\gamma_0}{h_S^3}||\dsquare{e_I}||_{L^2(S)}^2\\
&\lesssim h^{2k-2}||u||_{H^{k+1}(\Omega_1\cup\,\Omega_2)}^2\,.
\end{align*}
To estimate the dG-norm of $d_h$ we use the Galerkin orthogonality of Proposition \ref{galerkin} to infer
\begin{align*}
a_h(d_h,v_h)=a_h(u_h-\mathcal{I}_h u,v_h)=a_h(u-\mathcal{I}_h u,v_h)=a_h(e_I,v_h)\quad\forall v_h\in\mathbb{V}_h^k\,.
\end{align*}
As a consequence, coercivity of $a_h$ and the Cauchy-Schwarz inequality yield 
\begin{align*}
\frac{1}{2}||d_h||_{\text{dG}}^2&\leq a_h(d_h,d_h)=a_h(e_I,d_h)\\[5pt]
&\leq \sum_{T\in\mathcal{T}_h}||D^2 e_I||_{L^2(T)} ||D^2 d_h||_{L^2(T)}\\
&\quad+ \sum_{S\in\mathcal{S}_h\setminus\Shif} ||\dcurl{\partial_\eta\nabla e_I}||_{L^2(S)} ||\dsquare{\nabla d_h}||_{L^2(S)} + ||\dcurl{\partial_\eta\nabla d_h}||_{L^2(S)} ||\dsquare{\nabla e_I}||_{L^2(S)} \\
&\quad+ \sum_{S\in\mathcal{S}_h} ||\dcurl{\partial_\eta\Delta e_I}||_{L^2(S)} ||\dsquare{d_h}||_{L^2(S)} + ||\dcurl{\partial_\eta\Delta d_h}||_{L^2(S)} ||\dsquare{e_I}||_{L^2(S)} \\
&\quad+ \sum_{S\in\mathcal{S}_h\setminus\Shif}\frac{\gamma_1}{h_S}||\dsquare{\nabla e_I}||_{L^2(S)} ||\dsquare{\nabla d_h}||_{L^2(S)} + \sum_{S\in\mathcal{S}_h}\frac{\gamma_0}{h_S^3}||\dsquare{e_I}||_{L^2(S)} ||\dsquare{d_h}||_{L^2(S)}\,.
\end{align*}
The inverse estimates of \thref{inverse_estimates} imply
\begin{align*}
||\dcurl{\partial_\eta\nabla d_h}||_{L^2(S)}&\leq c h_S^{-1/2}||D^2 d_h||_{L^2(T)}\,,\\
||\dcurl{\partial_\eta\Delta d_h}||_{L^2(S)}&\leq c h_S^{-3/2}||D^2 d_h||_{L^2(T)}\,,
\end{align*}
where $T\in\{T_+,T_-\}$ is the element with the larger contribution.
Hence,
\begin{align*}
||\dcurl{\partial_\eta\nabla d_h}||_{L^2(S)} ||\dsquare{\nabla e_I}||_{L^2(S)} &\leq \frac{c^2}{h_S}||\dsquare{\nabla e_I}||_{L^2(S)}^2 + \frac{1}{4}||D^2 d_h||_{L^2(T)}^2\,,\\
||\dcurl{\partial_\eta\Delta d_h}||_{L^2(S)} ||\dsquare{e_I}||_{L^2(S)} &\leq \frac{c^2}{h_S^3}||\dsquare{e_I}||_{L^2(S)}^2 + \frac{1}{4}||D^2 d_h||_{L^2(T)}^2\,,
\end{align*}
by Young's inequality $ab\leq a^2+b^2/4$. Similarly, we obtain
\begin{align*}
||\dcurl{\partial_\eta\nabla e_I}||_{L^2(S)} ||\dsquare{\nabla d_h}||_{L^2(S)} &\leq \frac{h_S}{\gamma_1}||\dcurl{\partial_\eta\nabla e_I}||_{L^2(S)}^2 + \frac{\gamma_1}{4h_S}||\dsquare{\nabla d_h}||_{L^2(S)}^2\,,\\
||\dcurl{\partial_\eta\Delta e_I}||_{L^2(S)} ||\dsquare{d_h}||_{L^2(S)} &\leq \frac{h_S^3}{\gamma_0}||\dcurl{\partial_\eta\Delta e_I}||_{L^2(S)}^2 + \frac{\gamma_0}{4h_S^3}||\dsquare{d_h}||_{L^2(S)}^2\,,
\end{align*}
as well as
\begin{align*}
\frac{\gamma_1}{h_S}||\dsquare{\nabla e_I}||_{L^2(S)} ||\dsquare{\nabla d_h}||_{L^2(S)} &\leq \frac{\gamma_1}{h_S}||\dsquare{\nabla e_I}||_{L^2(S)}^2 + \frac{\gamma_1}{4h_S}||\dsquare{\nabla d_h}||_{L^2(S)}^2\,,\\
\frac{\gamma_0}{h_S^3}||\dsquare{e_I}||_{L^2(S)} ||\dsquare{d_h}||_{L^2(S)} &\leq \frac{\gamma_0}{h_S^3}||\dsquare{e_I}||_{L^2(S)}^2 + \frac{\gamma_0}{4h_S^3}||\dsquare{d_h}||_{L^2(S)}^2\,,
\end{align*}
and lastly
\begin{align*}
||D^2 e_I||_{L^2(T)} ||D^2 d_h||_{L^2(T)} \leq ||D^2 e_I||_{L^2(T)}^2 +\frac{1}{4}||D^2 d_h||_{L^2(T)}^2\,.
\end{align*}
The sum of all terms involving $d_h$ coincides with $(1/4)||d_h||_{\text{dG}}^2$ and can be absorbed on the left-hand side $(1/2)||d_h||_{\text{dG}}^2$. A final application of \thref{interpolation_curved_elements,interpolation_curved_sides} on the remaining terms involving $e_I$ yields
\begin{align*}
||d_h||_{\text{dG}}^2&\lesssim \sum_{T\in\mathcal{T}_h}||D^2 e_I||_{L^2(T)}^2
+ \sum_{S\in\mathcal{S}_h\setminus\Shif} h_S||\dcurl{\partial_\eta\nabla e_I}||_{L^2(S)}^2 + h_S^{-1}||\dsquare{\nabla e_I}||_{L^2(S)}^2 \\
&\quad+ \sum_{S\in\mathcal{S}_h} h_S^3||\dcurl{\partial_\eta\Delta e_I}||_{L^2(S)}^2 + h_S^{-3}||\dsquare{e_I}||_{L^2(S)}^2\\[5pt]
&\lesssim h^{2k-2}||u||_{H^{k+1}(\Omega_1\cup\,\Omega_2)}^2\,,
\end{align*}
which shows the claim for $k\geq3$.

(ii) For the case $k=2$, \thref{interpolation_curved_elements,interpolation_curved_sides} can be applied to terms up to second order $m\leq2$
\begin{align*}
||e_I||_{\text{dG}}^2+\sum_{S\in\mathcal{S}_h\setminus\Shif} h_S||\dcurl{\partial_\eta\nabla e_I}||_{L^2(S)}^2 &\lesssim h^{2}||u||_{H^{3}(\Omega_1\cup\,\Omega_2)}^2\lesssim h^{2}||u||_{H^{4}(\Omega_1\cup\,\Omega_2)}^2\,.
\end{align*}
However, \thref{interpolation_curved_sides} does not apply to the third order side term $||\dcurl{\partial_\eta\Delta e_I}||_{L^2(S)}$ due to the restriction $m\leq k$. Nevertheless, we may take a similar workaround as in Proposition 4.4 from \cite{Bonito_Bending}. By the same arguments as in the proof of \thref{interpolation_curved_sides} we have
\begin{align*}
h_S^3||\dcurl{\partial_\eta\Delta e_I}||^2_{L^2(S)}\lesssim h_T^{-2}\Bigl(\sum_{j=1}^3||D^j \widehat{e_I}||^2_{L^2(\widehat{T})}+||D^4 \widehat{e_I}||^2_{L^2(\widehat{T})}\Bigr)\,,
\end{align*}
where we split the highest order term from the lower order ones. By a transformation to the element $T$, the Bramble--Hilbert lemma and using that $||u||_{H^3(T)}\lesssim||u||_{H^4(T)}$ we have
\begin{align*}
h_T^{-2}\sum_{j=1}^3||D^j \widehat{e_I}||^2_{L^2(\widehat{T})}\stackrel{\mathclap{\eqref{iso_estimate_elements}}}{\lesssim}\sum_{j=1}^3h_T^{2j-4}\sum_{i=1}^j||D^i e_I||_{L^2(T)}^2\lesssim h_T^2||u||_{H^4(T)}^2\,.
\end{align*}
To treat the fourth order term, we add and subtract the Lagrange interpolant $\widehat{\mathcal{I}}^3\widehat{u}$
\begin{align*}
||D^4(\widehat{u}-\widehat{\mathcal{I}}\widehat{u})||^2_{L^2(\widehat{T})}\lesssim||D^4(\widehat{u}-\widehat{\mathcal{I}}^3\widehat{u})||^2_{L^2(\widehat{T})}+||D^4(\widehat{\mathcal{I}}^3\widehat{u}-\widehat{\mathcal{I}}\widehat{u})||^2_{L^2(\widehat{T})}\,.
\end{align*}
Again, by the Bramble-Hilbert lemma and \eqref{iso_estimate_elements} we have $||D^4(\widehat{u}-\widehat{\mathcal{I}}^3\widehat{u})||^2_{L^2(\widehat{T})}\lesssim h_T^6||u||_{H^4(T)}^2$. Since the second norm only involves polynomials on $\widehat{T}$, we use an inverse estimate 
\begin{align*}
||D^4(\widehat{\mathcal{I}}^3\widehat{u}-\widehat{\mathcal{I}}\widehat{u})||^2_{L^2(\widehat{T})}\lesssim ||D^3(\widehat{\mathcal{I}}^3\widehat{u}-\widehat{\mathcal{I}}\widehat{u})||^2_{L^2(\widehat{T})}\,.
\end{align*}
Next, we add and subtract $\widehat{u}$ and apply the Bramble-Hilbert lemma and \eqref{iso_estimate_elements} to deduce
\begin{align*}
||D^4(\widehat{\mathcal{I}}^3\widehat{u}-\widehat{\mathcal{I}}\widehat{u})||^2_{L^2(\widehat{T})}&\lesssim ||D^3(\widehat{u}-\widehat{\mathcal{I}}^3\widehat{u})||^2_{L^2(\widehat{T})}+||D^3(\widehat{u}-\widehat{\mathcal{I}}\widehat{u})||^2_{L^2(\widehat{T})}\\
&\lesssim h_T^6||u||_{H^4(T)}^2+h_T^4||u||_{H^3(T)}^2\,.
\end{align*}
Combining the previous estimates, we arrive at
\begin{align*}
h_S^3||\dcurl{\partial_\eta\Delta e_I}||^2_{L^2(S)}&\lesssim h_T^{-2}\Bigl(\sum_{j=1}^3||D^j \widehat{e_I}||^2_{L^2(\widehat{T})}+||D^4 \widehat{e_I}||^2_{L^2(\widehat{T})}\Bigr)\\
&\lesssim h_T^{-2}\Bigl(h_T^4||u||_{H^4(T)}^2+h_T^6||u||_{H^4(T)}^2+h_T^4||u||_{H^3(T)}^2\Bigr)\\[5pt]
&\lesssim h^2||u||^2_{H^4(T)}\,,
\end{align*}
which completes the proof.
\end{proof} 
\end{thm}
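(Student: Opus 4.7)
The plan is a standard Strang-type decomposition: write $u-u_h = (u-\mathcal{I}_h u)+(\mathcal{I}_h u - u_h) =: e_I + d_h$, where $\mathcal{I}_h u$ is the isoparametric Lagrange interpolant (which is well-defined because the piecewise $H^{k+1}$-regularity with $k\geq 2$ gives continuity on each subdomain in two dimensions, and because the admissible space enforces $u\in H^1(\Omega)$). A triangle inequality reduces the estimate to controlling $\|e_I\|_{\mathrm{dG}}$ and $\|d_h\|_{\mathrm{dG}}$ separately. For $\|e_I\|_{\mathrm{dG}}$ I would insert the definition of the dG-norm and apply \thref{interpolation_curved_elements} to the broken $H^2$-seminorm and \thref{interpolation_curved_sides} (with $m=1$ and $m=0$) to the two weighted jump terms; each contribution then scales like $h^{2(k-1)}\|u\|_{H^{k+1}(\Omega_1\cup\Omega_2)}^2$.

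For $\|d_h\|_{\mathrm{dG}}$ I would chain the three pillars of the previous section: coercivity gives $\tfrac12 \|d_h\|_{\mathrm{dG}}^2 \leq a_h(d_h,d_h)$, Galerkin orthogonality (Proposition~\ref{galerkin}) together with linearity of $a_h$ rewrites the right-hand side as $a_h(e_I, d_h)$, and then Cauchy--Schwarz on each of the six contributions of $a_h$ splits the result into products of $e_I$-norms with $d_h$-norms. The $d_h$-factors that live inside jumps across edges are absorbed directly into $\|d_h\|_{\mathrm{dG}}^2$ via Young's inequality, while the two average-type factors $\dcurl{\partial_\eta \nabla d_h}$ and $\dcurl{\partial_\eta \Delta d_h}$ must first be converted into interior contributions $\|D^2 d_h\|_{L^2(T)}$ through the inverse estimates of \thref{inverse_estimates}. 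Choosing the Young weights so that the total $d_h$-contribution is at most $\tfrac14 \|d_h\|_{\mathrm{dG}}^2$ and absorbing leaves a sum of $e_I$-terms, each of order $h^{2(k-1)}\|u\|^2_{H^{k+1}}$ by a last application of the two interpolation lemmas.

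The main obstacle is part (ii), the case $k=2$, because \thref{interpolation_curved_sides} only covers derivative orders $m \leq k$, while the term $h_S^3 \|\dcurl{\partial_\eta \Delta e_I}\|_{L^2(S)}^2$ involves a third-order derivative on the edge. My plan is to mimic the proof of \thref{interpolation_curved_sides}: pull back to $\widehat T$, use the trace estimate and isoparametric scaling to bound this quantity by $h_T^{-2}\sum_{j=1}^{4}\|D^j \widehat{e_I}\|_{L^2(\widehat T)}^2$, and then treat the low-order part $j\leq 3$ by direct application of Bramble--Hilbert, which loses one order of $h$ relative to the $k\geq 3$ case. The $j=4$ term is the delicate one, because $\widehat{\mathcal{I}}$ only reproduces quadratics so plain Bramble--Hilbert would give nothing. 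I would insert an auxiliary cubic Lagrange interpolant $\widehat{\mathcal{I}}^3 \widehat u$ and split $\widehat u - \widehat{\mathcal{I}}\widehat u = (\widehat u - \widehat{\mathcal{I}}^3 \widehat u) + (\widehat{\mathcal{I}}^3 \widehat u - \widehat{\mathcal{I}} \widehat u)$. The first summand is handled by Bramble--Hilbert using the assumed $H^4$-regularity. The second summand is polynomial, so a finite-dimensional inverse estimate lets me drop from $D^4$ to $D^3$, and after reinserting $\widehat u$ a second application of Bramble--Hilbert finishes the job. Bookkeeping the $h_T$-powers then delivers $h^2 \|u\|_{H^4(T)}^2$, matching the stated rate.
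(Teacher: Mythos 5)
Your proposal is correct and follows essentially the same route as the paper's own proof: the same interpolant-based splitting into $e_I$ and $d_h$, the same coercivity/Galerkin-orthogonality/inverse-estimate/Young absorption argument for $d_h$, and for $k=2$ the same device of inserting an auxiliary cubic interpolant and using a polynomial inverse estimate to handle the fourth-order reference-element term. No substantive differences to report.
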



\section{Interface Approximation}\label{sec:if_approx}
In practice, the interface is generally not given as a union of images of polynomial maps. This section addresses the piecewise polynomial approximation of the interface. There are many results on the approximation of domains involving curved boundaries, e.g. for the Laplace--Beltrami operator \cite{BONITO20201} or for fourth order Kirchhoff plate bending problems \cite{Arnold_Walker}. We follow ideas for isoparametric finite elements \cite{Lenoir} by \textsc{Lenoir}. The aim is to compare the exact solution defined on the curved domain $\Omega$ with the transformed solution defined on an approximation $\Omega^m$ of $\Omega$ that consists of polynomial elements $T^m$ of order $m\geq1$ with maximum diameter $h$. Our error estimates then rely on the existence of a map $\Psi_h:\Omega^m\to\Omega$ satisfying for every $T^m$ the estimate
\begin{align}\label{interface_approx1}
	||D^s(\Psi_h-\id_{T^m})||_{L^\infty(T^m)}\lesssim h^{m+1-s}\quad\forall\,0\leq s\leq m+1\,.
\end{align}
An explicit formula for the map is given in \cite[eq. (32)]{Lenoir}. In case of the Poisson problem $-\Delta u=f$ with $u=0$ on $\partial\Omega$, we denote $\widetilde{u}=u\circ\Psi_h$ and $J=\nabla\Psi_h$. Equation \eqref{interface_approx1} gives an $h^m$-bound on the right-hand side of the following identity
\[
\int_{\Omega}\nabla u\cdot\nabla v\dx-\int_{\Omega^m}\nabla\widetilde{u}\cdot\nabla\widetilde{v}\dxtilde=\int_{\Omega^m}\nabla\widetilde{u}\cdot[\text{det}(J)J^{-1}J^{-\top}-I]\nabla\widetilde{v}\dxtilde\,,
\]
which is also called the geometric consistency error.

\medskip
In case of an internal interface approximation, we assume that $\mathcal{T}_h$ and $\mathcal{T}^m_h$ are triangulations of $\Omega$, such that possibly non-polynomial elements $T\in\mathcal{T}_h$ resolve the interface $\If$ exactly and such that the interface is approximated by sides of elements $T^m\in\mathcal{T}^m_h$ of polynomial order $m\geq1$ with nodes (degrees of freedom) on $\If$, thus forming an approximate interface $\Ifm$. From \cite{Lenoir} we know, that there exist local mappings $\Psi_T:T^m\to T$ between the triangulations, satisfying the following key properties. The collection of these local maps defines a global map $\Psi_h:\mathcal{T}_h^m\to\mathcal{T}_h$.
\begin{lem}[Properties of $\Psi_T$]
(i) There exist local mappings $\Psi_T:T^m\to T$ satisfying
\begin{align}
	||D^s(\Psi_T-\id_{T^m})||_{L^\infty(T^m)}&\lesssim h_T^{m+1-s}\quad\forall\,0\leq s\leq m+1\,,\label{interface_approx}\\
	||D^s(\Psi_T^{-1}-\id_{T})||_{L^\infty(T)}&\lesssim h_T^{m+1-s}\quad\forall\,0\leq s\leq m+1\,.\label{interface_approx_inv}
\end{align}
(ii) The maps $\Psi_T:T^m\to T$ satisfy
\begin{align}
	\sup_{x\in T^m}|\det(D\Psi_T)-1\,|&\lesssim h_T^{m}\,,\label{interface_approx_det}\\
	\sup_{x\in T}|\det(D\Psi_T^{-1})-1\,|&\lesssim h_T^{m}\,.\label{interface_approx_inv_det}
\end{align}
(iii) For every $v\in H^k(T)$ and $\widetilde{v}=v\circ\Psi_T$ we have equivalence of the norms
\begin{align}\label{norm_equivalence}
||\widetilde{v}||_{H^k(T^m)}\sim||v||_{H^k(T)}\,.
\end{align}
\begin{proof}
(i) The first item follows by arguments of \cite{Lenoir}.

(ii) The second item is a consequence of (i).

(iii) The norm equivalence follows by a transformation formula, using that $||D\Psi_T^{-1}||_{L^\infty(T)}$ and $||\det(D\Psi_T)||_{L^\infty(T^m)}$ are uniformly bounded with respect to $h$ (see equation (SM2.10) in the supplementary materials of \cite{Arnold_Walker}).
\end{proof}
\end{lem}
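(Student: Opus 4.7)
The plan is to follow \textsc{Lenoir}'s explicit construction for (i), reduce (ii) to a short determinant expansion, and obtain (iii) by combining (ii) with the chain rule. I would treat the three items in order, reusing each in the next.

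For item (i), I would invoke the construction underlying Proposition~1 of \cite{Lenoir}. Locally one writes $\Psi_T = \id_{T^m} + \delta_T$, where $\delta_T$ is a polynomial correction of degree $m$ that, on the side of $T^m$ lying on $\Ifm$, interpolates the parameterization of the exact curve $\If$ at the interface degrees of freedom and vanishes at the remaining nodes. Since $\If$ is sufficiently smooth, a Taylor expansion of the interface parameterization about the interpolation points yields $\|D^s\delta_T\|_{L^\infty(T^m)}\lesssim h_T^{m+1-s}$ for $0\leq s\leq m+1$, which is \eqref{interface_approx}. The bound \eqref{interface_approx_inv} then follows by differentiating the identity $\Psi_T\circ\Psi_T^{-1}=\id_T$ repeatedly and inverting $D\Psi_T = I + D\delta_T$, which is admissible because $\|D\delta_T\|_{L^\infty}\lesssim h_T^m$ is small for $h$ small enough.

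Item (ii) is a direct computation from (i). Writing $D\Psi_T = I + E$ with $\|E\|_{L^\infty(T^m)}\lesssim h_T^m$, the two-dimensional determinant satisfies
\[
\det(D\Psi_T)-1 = \operatorname{tr}(E) + \det(E),
\]
so $|\det(D\Psi_T)-1|\lesssim h_T^m$, which gives \eqref{interface_approx_det}. The same expansion applied to $D\Psi_T^{-1}$, together with the bound \eqref{interface_approx_inv} at $s=1$, yields \eqref{interface_approx_inv_det}.

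For item (iii), I would start from the change-of-variables identity $\|\widetilde{v}\|_{L^2(T^m)}^2 = \int_T |v|^2 \,|\det(D\Psi_T^{-1})|\dx$ and use (ii) to conclude that the Jacobian determinant is bounded above and away from zero uniformly in $h$, so the two $L^2$-norms are equivalent. For derivatives up to order $k$, Faà di Bruno's formula expresses $D^j\widetilde{v}$ as a finite sum of products of $D^p v\circ\Psi_T$ with $p\leq j$ against products of factors $D^q\Psi_T$ with $q\leq j$. By (i) each such factor is uniformly bounded in $L^\infty$ (since $\|D\Psi_T\|_{L^\infty}\lesssim 1$ and $\|D^s\Psi_T\|_{L^\infty}\lesssim h_T^{m+1-s}\lesssim 1$ for $s\geq 2$), and the analogous bounds hold for $\Psi_T^{-1}$, yielding the reverse inequality. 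The main obstacle here is the bookkeeping in the chain rule for $k\geq 2$: one has to verify that every term produced is dominated by $\|v\|_{H^k(T)}$ with a constant independent of $h$, which ultimately reduces to the scale-free character of the estimates in (i) once $m\geq 1$.
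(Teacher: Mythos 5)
Your proposal is correct and follows essentially the same route as the paper, whose own proof is only a three-line citation sketch: item (i) is attributed to the construction in Lenoir, (ii) is declared a consequence of (i), and (iii) is obtained from the transformation formula using the uniform bounds on $D\Psi_T^{-1}$ and $\det(D\Psi_T)$. Your write-up simply supplies the details behind those citations (the perturbation $\Psi_T=\id+\delta_T$, the two-dimensional expansion $\det(I+E)=1+\operatorname{tr}(E)+\det(E)$, and the chain-rule bookkeeping, which indeed only requires $k\leq m+1$ as in the paper's application with $k=2$, $m\geq1$), so no further comment is needed.
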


We require clamped boundary conditions $u=g$ and $\nabla u=\Phi$ on $\partial\Omega$ and denote 
\begin{align}
\mathbb{A}(g,\Phi)&:=\functions{v\in H^2(\Omega\setminus\If)\cap H^1(\Omega)}{v=g,\nabla v=\Phi\hspace{0.5em}\text{on }\partial\Omega}\,,\\
\mathbb{A}_m(g,\Phi)&:=\functions{v\in H^2(\Omega\setminus\Ifm)\cap H^1(\Omega)}{v=g,\nabla v=\Phi\hspace{0.5em}\text{on }\partial\Omega}\,.
\end{align}
Consider the following problem: Find $u\in \mathbb{A}(g,\Phi)$, such that for every $v\in \mathbb{A}(0,0)$ we have
\begin{align}\label{interface_problem}\tag{P}
\int_{\Omega\setminus\If}D^2u:D^2v\dx=\int_\Omega f v\dx\,.
\end{align}
Furthermore, we define the approximate problem: Find $\widetilde{u}_m\in \mathbb{A}_m(g,\Phi)$, such that for every $\widetilde{v}\in \mathbb{A}_m(0,0)$ we have
\begin{align}\label{interface_problem_approx}\tag{P\textsubscript{m}}
\int_{\Omega\setminus\Ifm}D^2\widetilde{u}_m:D^2\widetilde{v}\dxtilde=\int_\Omega \widetilde{f}_m \widetilde{v}\dxtilde\,.
\end{align}

Error estimates rely on the following result for the geometric consistency error.

\begin{lem}[Geometric consistency error]
With the above assumptions, we have
\begin{align}\label{consistency_error}
\left|\int_T D^2 v:D^2w\dx-\int_{T^m}D^2\widetilde{v}:D^2\widetilde{w}\dxtilde\right|\lesssim h^{m-1}||v||_{H^2(T)}||w||_{H^2(T)}
\end{align}
\begin{proof}
We denote $\widetilde{v}=v\circ\Psi_T$ and $\widetilde{x}=\Psi_T^{-1}(x)\in T^m$ for $x\in T$. The chain rule gives
\[
D^2v(x)=D^2\Psi_T^{-1}(x)\nabla \widetilde{v}(\widetilde{x})+D\Psi_T^{-1}(x)D^2 \widetilde{v}(\widetilde{x}) D\Psi_T\Tinv(x)\,.
\]
For simplicity we omit the arguments $x$ and $\widetilde{x}$ in the following. A transformation shows 
\begin{align*}
\int_T D^2v:D^2w\dx-\int_{T^m} D^2\widetilde{v}:D^2\widetilde{w}\dxtilde=\uproman{1}+\uproman{2}+\uproman{3}+\uproman{4}
\end{align*}
with terms $\uproman{1},\uproman{2}$ and $\uproman{3}$ including $D^2\Psi_h^{-1}$
\begin{align*}
\uproman{1}&=\int_{T^m}D^2\Psi_T^{-1}\nabla \widetilde{v}:D^2\Psi_T^{-1}\nabla \widetilde{w}\det(D\Psi_T)\dxtilde\,,\\
\uproman{2}&=\int_{T^m}D^2\Psi_T^{-1}\nabla \widetilde{v}:D\Psi_T^{-1}D^2 \widetilde{w} D\Psi_T\Tinv\det(D\Psi_T)\dxtilde\,,\\
\uproman{3}&=\int_{T^m}D\Psi_T^{-1}D^2 \widetilde{v} D\Psi_T\Tinv:D^2\Psi_T^{-1}\nabla \widetilde{w}\det(D\Psi_T)\dxtilde\,,
\end{align*}
and the term $\uproman{4}$ excluding $D^2\Psi_T^{-1}$
\begin{align*}
\uproman{4}=\int_{T^m}D\Psi_T^{-1}D^2 \widetilde{v} D\Psi_T\Tinv:D\Psi_h^{-1}D^2 \widetilde{w} D\Psi_T\Tinv\det(D\Psi_T)\dxtilde
-\int_{T^m} D^2\widetilde{v}:D^2\widetilde{w}\dxtilde\,.
\end{align*}
Since $||D\Psi_T^{-1}||_{L^\infty(T)}$ and $||\det(D\Psi_T)||_{L^\infty(T^m)}$ are uniformly bounded with respect to $h$, estimate \eqref{interface_approx_inv} for $s=2$, the triangle and the Cauchy--Schwarz inequality yield
\begin{align*}
\left|\uproman{1}+\uproman{2}+\uproman{3}\right|\lesssim h_T^{m-1}||\widetilde{v}||_{H^2(T^m)}||\widetilde{w}||_{H^2(T^m)}\,.
\end{align*}
To treat the fourth term, we add and subtract $D\Psi^{-1}_TD^2\widetilde{v}D\Psi\Tinv_T:D\Psi^{-1}_TD^2\widetilde{w}D\Psi\Tinv_T$ as well as $D\Psi^{-1}_T D^2\widetilde{v}D\Psi\Tinv_T:D^2\widetilde{w}$ to arrive at
\begin{align*}
\uproman{4}&=\int_{T^m}D\Psi_T^{-1}D^2 \widetilde{v} D\Psi_T\Tinv:D\Psi_h^{-1}D^2 \widetilde{w} D\Psi_T\Tinv(\det(D\Psi_T)-1)\dxtilde\\
&\hspace{1cm}+\int_{T^m}D\Psi_T^{-1}D^2 \widetilde{v} D\Psi_T\Tinv:(D\Psi_h^{-1}D^2 \widetilde{w} D\Psi_T\Tinv-D^2\widetilde{w})\dxtilde\\
&\hspace{1cm}+\int_{T^m}(D\Psi_T^{-1}D^2 \widetilde{v} D\Psi_T\Tinv-D^2\widetilde{v}):D^2 \widetilde{w} \dxtilde\\[5pt]
&=\uproman{4}_a+\uproman{4}_b+\uproman{4}_c\,.
\end{align*}
By \eqref{interface_approx_det} and the same arguments as above we have
\begin{align*}
\left|\uproman{4}_a\right|\lesssim h_T^{m}||\widetilde{v}||_{H^2(T^m)}||\widetilde{w}||_{H^2(T^m)}\,.
\end{align*}
To deal with $\uproman{4}_b$ and $\uproman{4}_c$ we add an subtract respectively $D\Psi^{-1}_TD^2\widetilde{v}D\Psi_T\Tinv:D\Psi_T^{-1}D^2\widetilde{w}$ and $D\Psi^{-1}_TD^2\widetilde{v}:D^2\widetilde{w}$ to obtain
\begin{align*}
\uproman{4}_b&=\int_{T^m}D\Psi^{-1}_TD^2\widetilde{v}D\Psi_T\Tinv:D\Psi^{-1}_TD^2\widetilde{w}(D\Psi_T\Tinv-I)\dxtilde\\
&\hspace{2cm}+\int_{T^m}D\Psi^{-1}_TD^2\widetilde{v}D\Psi_T\Tinv:(D\Psi^{-1}_T-I)D^2\widetilde{w}\dxtilde\,,\\
\uproman{4}_c&=\int_{T^m}D\psi^{-1}_TD^2\widetilde{v}(D\Psi_T\Tinv-I):D^2\widetilde{w}\dxtilde+\int_{T^m}(D\psi^{-1}_T-I)D^2\widetilde{v}:D^2\widetilde{w}\dxtilde\,.
\end{align*}
From the identity 
\begin{align*}
||D\Psi_T^{-1}-I||_{L^\infty(T)}=||(D\Psi_T^{-1}-I)\T||_{L^\infty(T)}=||D\Psi_T\Tinv-I||_{L^\infty(T)}
\end{align*}
and estimate \eqref{interface_approx_inv} for $s=1$ we deduce
\begin{align*}
\left|\uproman{4}_b+\uproman{4}_c\right|\lesssim h_T^{m}||\widetilde{v}||_{H^2(T^m)}||\widetilde{w}||_{H^2(T^m)}\,.
\end{align*}
Combining all the results, using the norm equivalence \eqref{norm_equivalence} and $h^m_T\lesssim h^{m-1}_T$ yields \eqref{consistency_error}.
\end{proof}
\end{lem}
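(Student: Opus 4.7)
The plan is to relate the two integrals by the chain rule and a change of variables, then bound the resulting perturbation systematically using the estimates \eqref{interface_approx_inv} and \eqref{interface_approx_det}. Writing $\widetilde{v}=v\circ\Psi_T$ and $\widetilde{x}=\Psi_T^{-1}(x)$, the chain rule gives
\[
D^2v(x) = D^2\Psi_T^{-1}(x)\,\nabla\widetilde{v}(\widetilde{x}) + D\Psi_T^{-1}(x)\,D^2\widetilde{v}(\widetilde{x})\,D\Psi_T\Tinv(x).
\]
Pulling back $\int_T D^2v:D^2w\dx$ to $T^m$ via $\Psi_T$ (producing the Jacobian $\det(D\Psi_T)$) and then subtracting $\int_{T^m}D^2\widetilde{v}:D^2\widetilde{w}\dxtilde$ decomposes the difference into four groups of terms: three of them (call them \uproman{1}, \uproman{2}, \uproman{3}) contain at least one factor of $D^2\Psi_T^{-1}$, and the remaining group \uproman{4} is built purely from first derivatives of $\Psi_T^{-1}$ acting on $D^2\widetilde{v}$ and $D^2\widetilde{w}$, with the Jacobian compared against $1$.

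For \uproman{1}, \uproman{2}, and \uproman{3} the estimate is immediate from Cauchy--Schwarz once one uses \eqref{interface_approx_inv} at $s=2$ to obtain $\|D^2\Psi_T^{-1}\|_{L^\infty(T)}\lesssim h_T^{m-1}$, together with the uniform-in-$h$ bounds on $\|D\Psi_T^{-1}\|_{L^\infty(T)}$ and $\|\det(D\Psi_T)\|_{L^\infty(T^m)}$. This yields $|\uproman{1}|+|\uproman{2}|+|\uproman{3}|\lesssim h_T^{m-1}\,\|\widetilde{v}\|_{H^2(T^m)}\|\widetilde{w}\|_{H^2(T^m)}$, which is already of the desired order.

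The main obstacle is \uproman{4}, where the principal parts $D\Psi_T^{-1}D^2\widetilde{v}D\Psi_T\Tinv:D\Psi_T^{-1}D^2\widetilde{w}D\Psi_T\Tinv\det(D\Psi_T)$ and $D^2\widetilde{v}:D^2\widetilde{w}$ match to leading order and one must extract the next-order correction. I would perform a telescoping decomposition: add and subtract intermediate products so that each resulting piece carries exactly one small difference, namely $\det(D\Psi_T)-1$, $D\Psi_T^{-1}-I$, or $D\Psi_T\Tinv-I$, while the other factors stay uniformly bounded. By \eqref{interface_approx_det} the first such factor is $O(h_T^m)$, and by \eqref{interface_approx_inv} at $s=1$ together with the identity $\|D\Psi_T\Tinv-I\|_{L^\infty(T)}=\|D\Psi_T^{-1}-I\|_{L^\infty(T)}$, the other two are also $O(h_T^m)$. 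Each piece is then bounded by $h_T^m\,\|\widetilde{v}\|_{H^2(T^m)}\|\widetilde{w}\|_{H^2(T^m)}$ via Cauchy--Schwarz with the bounded factors placed in $L^\infty$, and since $h_T\lesssim 1$ we have $h_T^m\lesssim h_T^{m-1}$.

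Combining the bounds on \uproman{1}--\uproman{4} and invoking the norm equivalence \eqref{norm_equivalence} to trade $\|\widetilde{v}\|_{H^2(T^m)}\|\widetilde{w}\|_{H^2(T^m)}$ for $\|v\|_{H^2(T)}\|w\|_{H^2(T)}$ yields \eqref{consistency_error}. The real difficulty is the bookkeeping in \uproman{4}: one has to arrange the add/subtract steps so that exactly one small difference is isolated per piece, being careful that the pieces that no longer match both transforms (so that $D\Psi_T^{-1}$ or $D\Psi_T\Tinv$ can be pulled out front) do not spoil the symmetry of the Frobenius contraction. The remaining terms reduce to routine applications of the chain rule, the change of variables formula, and Cauchy--Schwarz.
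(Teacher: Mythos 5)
Your proposal is correct and follows essentially the same route as the paper: the identical chain-rule identity, the same split into three terms carrying $D^2\Psi_T^{-1}$ (bounded at order $h_T^{m-1}$ via \eqref{interface_approx_inv} with $s=2$) plus one remainder term, the same telescoping of that remainder so each piece isolates a single $O(h_T^m)$ factor ($\det(D\Psi_T)-1$, $D\Psi_T^{-1}-I$, or $D\Psi_T\Tinv-I$), and the same final appeal to the norm equivalence \eqref{norm_equivalence}. The paper simply writes out explicitly the intermediate add-and-subtract terms that you describe in words.
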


We next state the main result of this section.

\begin{thm}[Error estimate]\thlabel{interface_error}
Let $u\in \mathbb{A}(g,\Phi)$ be the solution to \eqref{interface_problem} and $\widetilde{u}_m\in \mathbb{A}_m(g,\Phi)$ be the solution to \eqref{interface_problem_approx}. With $u_m=\widetilde{u}_m\circ\Psi_T^{-1}$ we have
\begin{align}
||D^2(u-u_m)||_{L^2(\mathcal{T}_h)}\lesssim ||\widetilde{f}_m-\widetilde{f}\det(D\Psi_h)||_{L^2(\mathcal{T}_h^m)}+h^{m-1} ||\widetilde{u}_m||_{H^2(\mathcal{T}_h^m)}\,.
\end{align}
Upon choosing $\widetilde{f}_m=\widetilde{f}\det(D\Psi_h)$ we conclude
\begin{align*}
||D^2(u-u_m)||_{L^2(\mathcal{T}_h)}\lesssim h^{m-1}||\widetilde{u}_m||_{H^2(\mathcal{T}_h^m)}\,.
\end{align*}
\begin{proof}
Let $v=u_m-u$. By \eqref{consistency_error} we have
\begin{align*}
||D^2(u-u_m)||^2_{L^2(\mathcal{T}_h)}&=\int_{\mathcal{T}_h} D^2(u_m-u):D^2v\dx\\
&=\int_{\mathcal{T}_h} D^2u_m:D^2v\dx-\int_{\mathcal{T}_h} D^2u:D^2v\dx\\
&\lesssim \int_{\mathcal{T}_h^m} D^2\widetilde{u}_m:D^2\widetilde{v}\dxtilde-\int_{\mathcal{T}_h} D^2u:D^2v\dx\\
&\hspace{1cm}+h^{m-1}||\widetilde{u}_m||_{H^2(\mathcal{T}_h^m)}||\widetilde{v}||_{H^2(\mathcal{T}_h^m)}\\
&=\int_{\mathcal{T}_h^m}\widetilde{f}_m\widetilde{v}\dxtilde-\int_{\mathcal{T}_h}fv\dx+h^{m-1}||\widetilde{u}_m||_{H^2(\mathcal{T}_h^m)}||\widetilde{v}||_{H^2(\mathcal{T}_h^m)}\\
&\lesssim ||\widetilde{f}_m-\widetilde{f}\det(D\Psi_h)||_{L^2(\mathcal{T}_h^m)}||\widetilde{v}||_{L^2(\mathcal{T}_h^m)}+h^{m-1}||\widetilde{u}_m||_{H^2(\mathcal{T}_h^m)}||\widetilde{v}||_{H^2(\mathcal{T}_h^m)}\,,
\end{align*}
where we used that $u$ and $\widetilde{u}_m$ solve \eqref{interface_problem} and \eqref{interface_problem_approx}, respectively, followed by a transformation formula for the term including $f$ and Hölder's inequality. Due to the clamped boundary conditions, we have for every $\widetilde{v}\in \mathbb{A}_m(0,0)$ the Poincaré inequalities
\begin{align*}
||\widetilde{v}||_{L^2(\mathcal{T}_h^m)}\lesssim ||\nabla\widetilde{v}||_{L^2(\mathcal{T}_h^m)}\lesssim ||D^2\widetilde{v}||_{L^2(\mathcal{T}_h^m)}\,.
\end{align*}
This implies the equivalence of the $H^2$-norm $||\widetilde{v}||_{H^2(\mathcal{T}_h^m)}$ to the $H^2$-seminorm $||D^2\widetilde{v}||_{L^2(\mathcal{T}_h^m)}$. 
Combining the above results with the norm equivalence \eqref{norm_equivalence} yields
\begin{align*}
||D^2(u-u_m)||^2_{L^2(\mathcal{T}_h)}\lesssim \Bigl[||\widetilde{f}_m-\widetilde{f}\det(D\Psi_h)||_{L^2(\mathcal{T}_h^m)}+h^{m-1}
||\widetilde{u}_m||_{H^2(\mathcal{T}_h^m)}\Bigr]||D^2v||_{L^2(\mathcal{T}_h)}\,.
\end{align*}
Absorbing $||D^2v||_{L^2(\mathcal{T}_h)}$ on the left-hand side concludes the proof.
\end{proof}
\end{thm}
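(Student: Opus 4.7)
The plan is to test the identity $||D^2(u-u_m)||^2_{L^2(\mathcal{T}_h)}=\int_{\mathcal{T}_h} D^2(u_m-u):D^2 v\,\mathrm{d}x$ with the specific choice $v:=u_m-u$. Since $u,u_m\in\mathbb{A}(g,\Phi)$ agree on $\partial\Omega$ (clamped data are prescribed only on the outer boundary, not on the interior interface), the difference $v$ lies in $\mathbb{A}(0,0)$, and its pullback $\widetilde v=v\circ\Psi_h$ correspondingly lies in $\mathbb{A}_m(0,0)$, so that both $v$ and $\widetilde v$ are admissible test functions in their respective problems \eqref{interface_problem} and \eqref{interface_problem_approx}.

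Next, I would split the right-hand side as $\int_{\mathcal{T}_h} D^2 u_m:D^2 v\,\mathrm{d}x-\int_{\mathcal{T}_h} D^2 u:D^2 v\,\mathrm{d}x$. The second integral equals $\int_\Omega f v\,\mathrm{d}x$ by \eqref{interface_problem}. The first integral, evaluated over the exact triangulation, is not directly accessible, so I would apply the geometric consistency error bound \eqref{consistency_error} element-by-element to swap it for $\int_{\mathcal{T}_h^m} D^2\widetilde u_m:D^2\widetilde v\,\mathrm{d}\tilde x$ at the price of a term of order $h^{m-1}\,\|\widetilde u_m\|_{H^2(\mathcal{T}_h^m)}\,\|\widetilde v\|_{H^2(\mathcal{T}_h^m)}$ (summed over elements and bounded by Cauchy--Schwarz). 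The transformed integral then equals $\int_{\mathcal{T}_h^m}\widetilde f_m\widetilde v\,\mathrm{d}\tilde x$ by \eqref{interface_problem_approx}.

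At this point I would rewrite $\int_\Omega f v\,\mathrm{d}x$ using the change of variables $x=\Psi_h(\tilde x)$ as $\int_{\mathcal{T}_h^m}\widetilde f\,\widetilde v\,\det(D\Psi_h)\,\mathrm{d}\tilde x$, so that the source contribution collapses into $\int_{\mathcal{T}_h^m}(\widetilde f_m-\widetilde f\det(D\Psi_h))\widetilde v\,\mathrm{d}\tilde x$, which Hölder's inequality controls by $\|\widetilde f_m-\widetilde f\det(D\Psi_h)\|_{L^2(\mathcal{T}_h^m)}\|\widetilde v\|_{L^2(\mathcal{T}_h^m)}$. The clamped boundary conditions on $\partial\Omega$ supply iterated Poincaré inequalities for $\widetilde v\in\mathbb{A}_m(0,0)$, giving $\|\widetilde v\|_{H^2(\mathcal{T}_h^m)}\lesssim\|D^2\widetilde v\|_{L^2(\mathcal{T}_h^m)}$, and the $H^2$-norm equivalence \eqref{norm_equivalence} through $\Psi_h$ then converts this into $\|D^2 v\|_{L^2(\mathcal{T}_h)}$. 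One factor of $\|D^2 v\|_{L^2(\mathcal{T}_h)}=\|D^2(u-u_m)\|_{L^2(\mathcal{T}_h)}$ is absorbed into the left-hand side, yielding the asserted estimate; the specialization $\widetilde f_m=\widetilde f\det(D\Psi_h)$ kills the data term, leaving only $h^{m-1}\|\widetilde u_m\|_{H^2(\mathcal{T}_h^m)}$.

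The main obstacle I expect is bookkeeping rather than an essential new idea: each elementwise application of \eqref{consistency_error} produces an $\|\widetilde v\|_{H^2}$ factor on the right, and I must be careful that, after summing over $T^m\in\mathcal{T}_h^m$ and applying Cauchy--Schwarz, the global $H^2$ factor on $\widetilde v$ can indeed be replaced by $\|D^2 v\|_{L^2(\mathcal{T}_h)}$ (using Poincaré and the norm equivalence globally, not just on each element). Once this is in place, absorption is immediate and no further tools are needed.
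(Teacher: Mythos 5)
Your proposal is correct and follows essentially the same route as the paper's proof: the same test function $v=u_m-u$, the same splitting, the same use of the geometric consistency bound \eqref{consistency_error} together with the two variational identities, the transformation of the force term, and the Poincar\'e/norm-equivalence/absorption argument at the end. The bookkeeping concern you flag (summing the elementwise consistency bounds and converting the global $H^2$ factor on $\widetilde v$ into $\|D^2 v\|_{L^2(\mathcal{T}_h)}$) is handled in the paper exactly as you describe, via Cauchy--Schwarz over elements, the Poincar\'e inequalities for $\widetilde v\in\mathbb{A}_m(0,0)$, and \eqref{norm_equivalence}.
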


\begin{rmk}[Babuška's paradox]\thlabel{babuska}
The scaling $h^{m-1}$ of \thref{interface_error} is in agreement with the implication of Babuška's paradox \cite{Babuska}, in which a sequence of polygonal, nested domains $(\omega_n)_{n\in\mathbb{N}}$ converging to the open unit ball $\omega=B_1(0)\in\mathbb{R}^2$ is considered, i.e.,
\begin{alignat*}{2}
&\omega^n\subset\omega^{n+1}\subset\omega, \qquad&&\forall n\in\mathbb{N},\\
&\omega_n\to\omega, \qquad&&\text{as }n\to\infty\,,
\end{alignat*}
in the sense that for every point $x\in\omega$ there exists an index $n(x)>0$ such that $x\in\omega_n$ for all $n>n(x)$. The paradox states, that solutions $u_n$ of a Kirchhoff plate bending problem defined on $(\omega_n)_{n\in\mathbb{N}}$ with simple support boundary conditions fail to converge to the solution $u$ of the limit problem defined on $\omega$. Similarly, our error estimate indicates, that solutions defined for a piecewise linear approximation of the interface (m=1) may not converge to the solution of the exact interface problem.

\end{rmk}


\section{Numerical Experiments}\label{sec:experiments}

In this section we provide numerical results that were obtained using \textsc{Matlab}. These include experimental convergence rates that confirm the scaling of the a priori error estimate under suitably imposed boundary conditions. In addition, we compare the folding model to the classical bending model without a fold. The interface of the folding model is highlighted by a solid curve in the following figures. For all the simulations we use second order isoparametric elements ($k=2$) and $\gamma_0=\gamma_1=10$.

\subsection{Bending vs Folding} 
We first demonstrate the influence of the folding mechanism by comparing the classical bending model without a fold to the folding model with a quadratic interface, given by $\Gamma(y)=[2/3-2/3(y-y^2),y]^\top$ for different boundary conditions, $\Omega=(0,1)^2$ and $f=0$. The colors in the figures represent the elementwise bending energy density of the configurations.

For the first experiment we apply fully clamped boundary conditions on $\partial_D\Omega=\partial\Omega$, as shown in Figure \ref{fully_clamped_comp}. Large curvatures focus around the folding points on the boundary in both simulations. In addition, we observe singularities at the respective corners of the interface in the folding model. This demonstrates the fact, that the regularity of solutions depends on the geometry of the interface and its compatibility to the boundary conditions.

\begin{figure}[!h]
\begin{minipage}{.49\linewidth}
\centering
\includegraphics[width=0.9\linewidth]{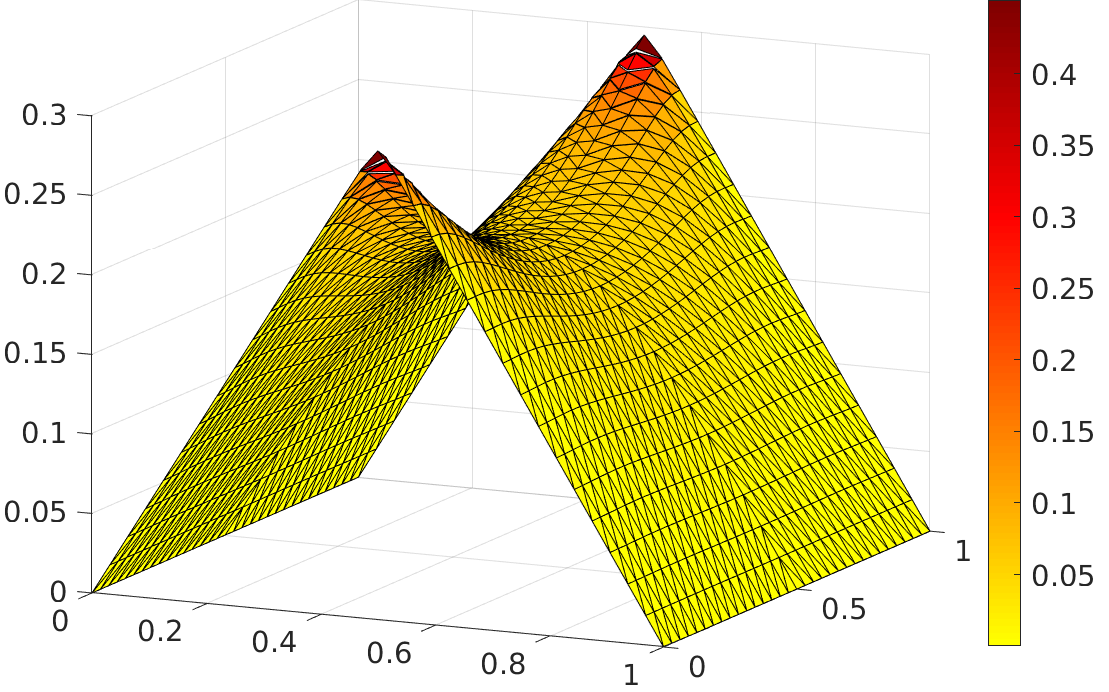}
\end{minipage}
\begin{minipage}{.49\linewidth}
\centering
\includegraphics[width=0.9\linewidth]{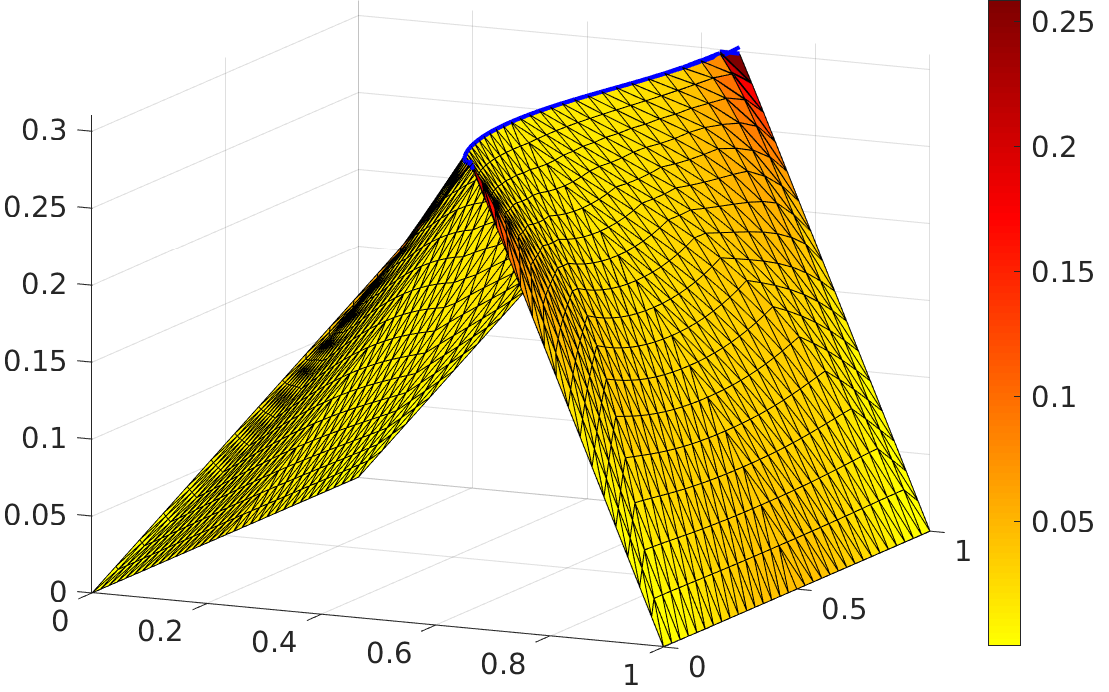}
\end{minipage}
\caption{Numerical simulation of the classical bending model without a fold (left) and the folding model with quadratic interface (right) for fully clamped boundary conditions.}
\label{fully_clamped_comp}
\end{figure}

Next we compare simulations for clamped boundary conditions $u=0$ and $\nabla u\neq0$ on the two sides $\{x=0\}$ and $\{x=1\}$, see Figure \ref{twosides_grad}. In the model without a fold, the curvatures are uniformly distributed along the plate with a slight increase in the direction of the plate center. In case of the folding model, high energy values are localized on the left side of the fold with maximum values occurring around the corners of the corresponding subdomain.

\begin{figure}[H]
\begin{minipage}{.49\linewidth}
\centering
\includegraphics[width=0.9\linewidth]{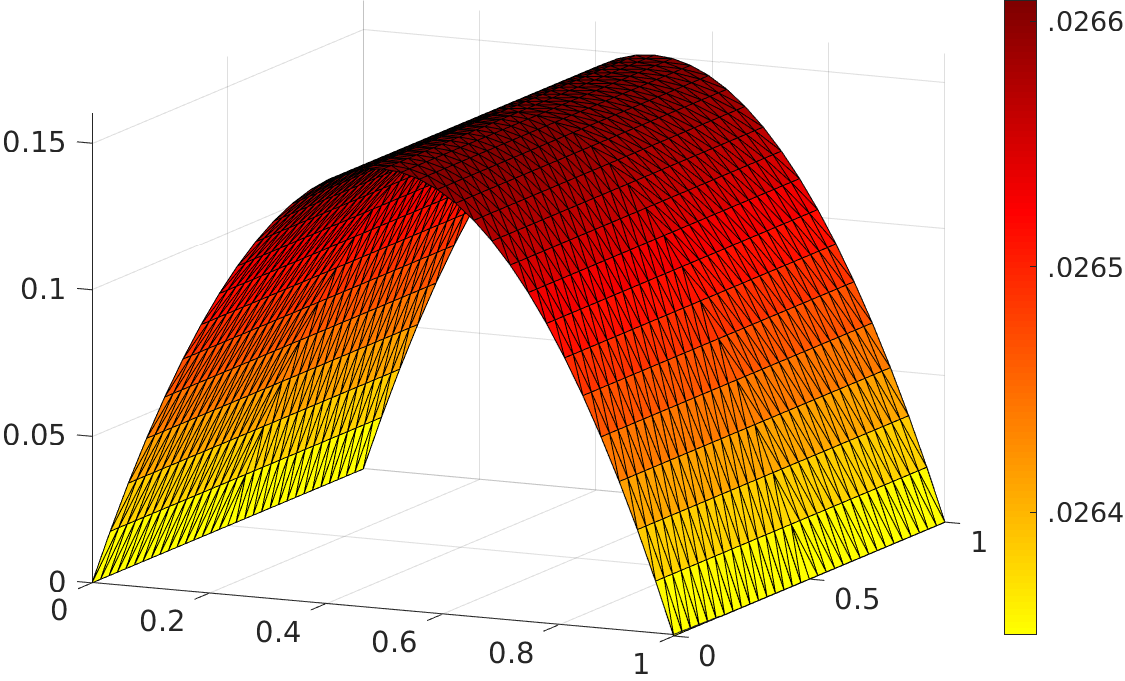}
\end{minipage}
\begin{minipage}{.49\linewidth}
\centering
\includegraphics[width=0.9\linewidth]{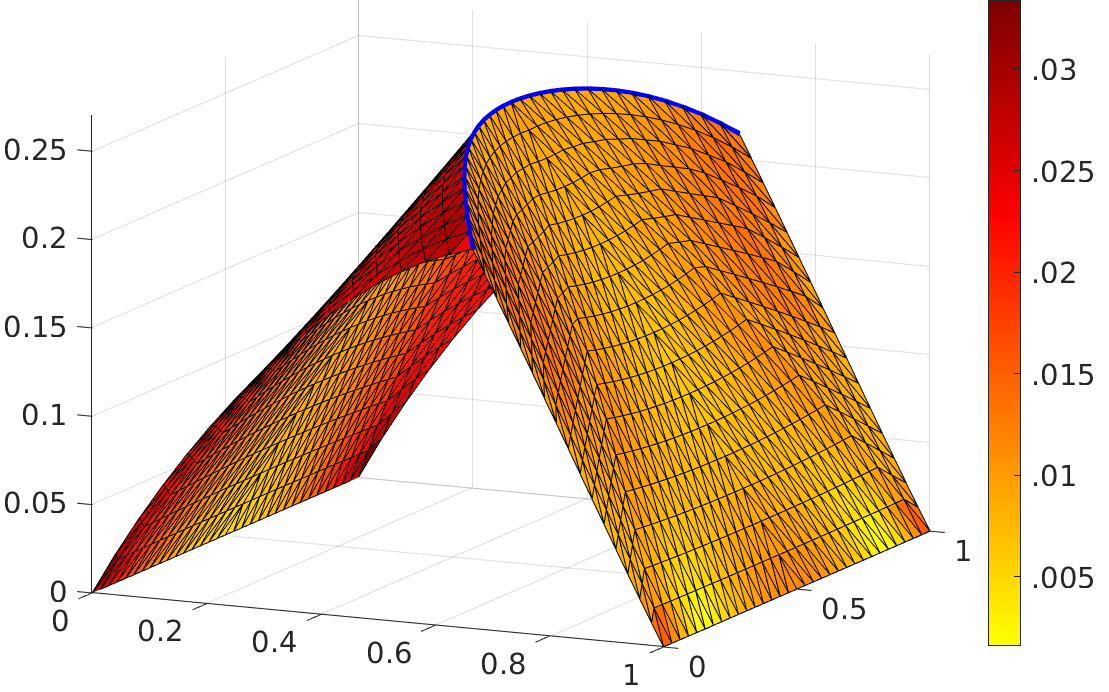}
\end{minipage}
\caption{Numerical simulation of the model without a fold (left) and the folding model with quadratic interface (right) for clamped boundary conditions on two sides of the plate.}
\label{twosides_grad}
\end{figure}

For the next simulation we apply clamped boundary conditions $u=0.3$ on $\{x=0\}$, $u=0$ on $\{x=1\}$ and $\nabla u=0$ on both sides, as can be seen in Figure \ref{twosides_zerograd}. The energy distributions of both configurations are very similar. In contrast to the previous simulation, large curvature values occur along the Dirichlet boundary with decreasing values in the direction of the plate center. Low curvature values are distributed along the fold, even around the endpoints of the interface.

\begin{figure}[H]
\begin{minipage}{.49\linewidth}
\centering
\includegraphics[width=0.9\linewidth]{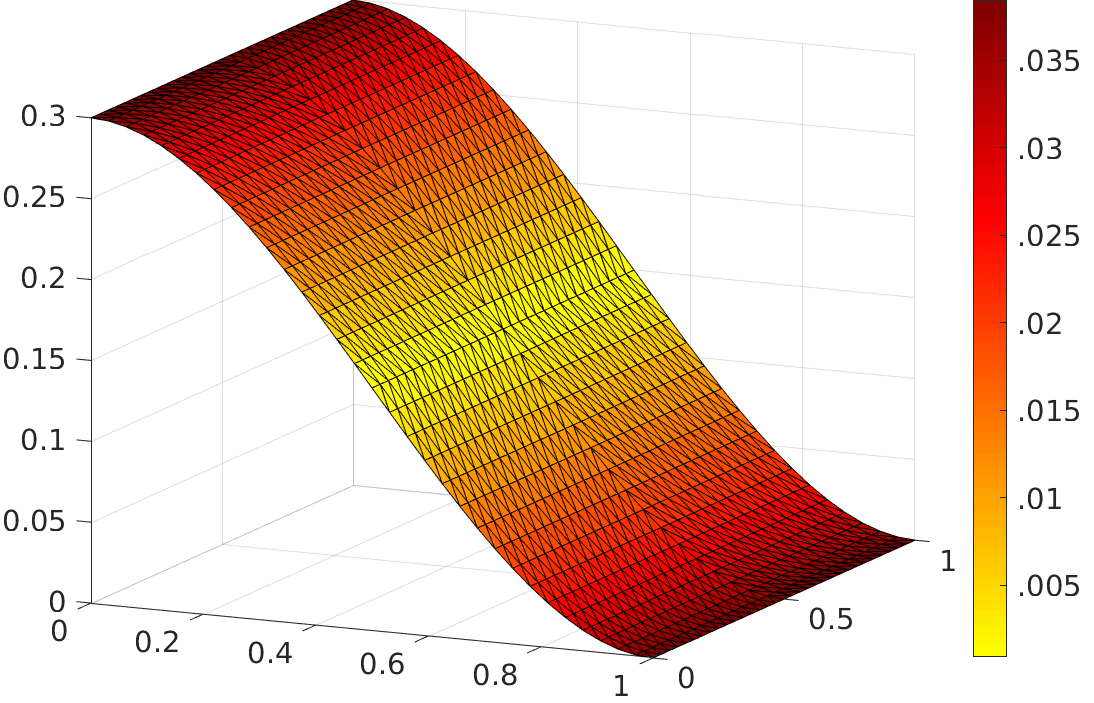}
\end{minipage}
\begin{minipage}{.49\linewidth}
\centering
\includegraphics[width=0.9\linewidth]{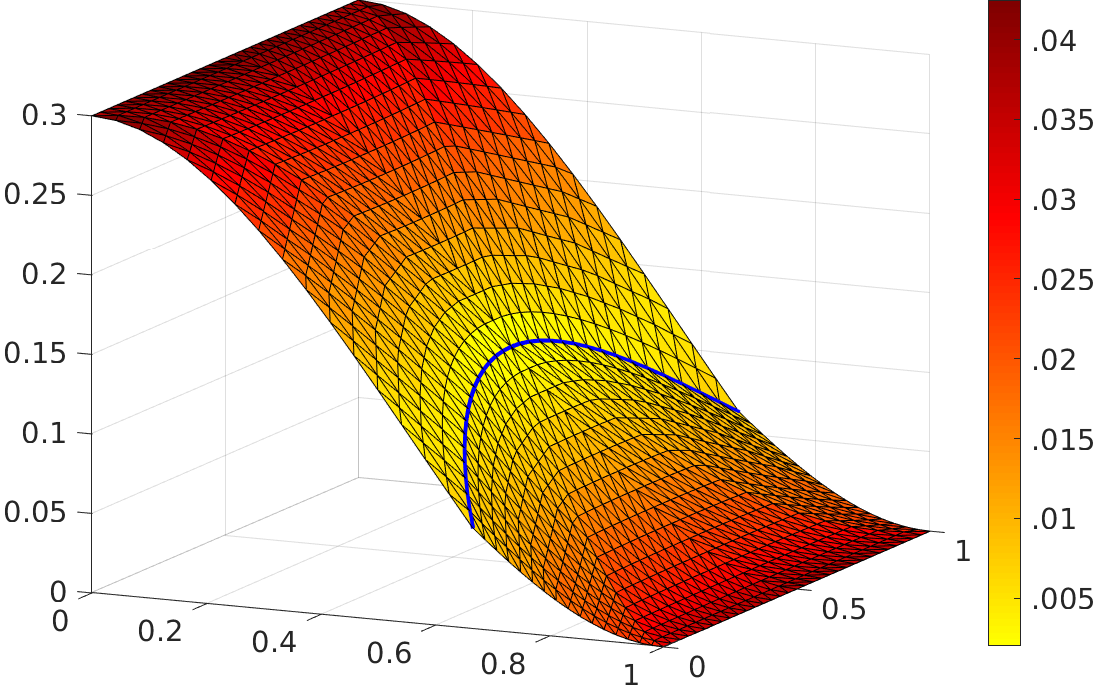}
\end{minipage}
\caption{Numerical simulation of the model without a fold (left) and the folding model with quadratic interface (right) for clamped boundary conditions on two sides of the plate.}
\label{twosides_zerograd}
\end{figure}

For the final simulation of this section, we apply clamped boundary conditions $u=0$ and $\nabla u=0$ on the right side $\{x\geq2/3\}\cap\partial\Omega$ (of the folding curve) and fix $u(x_0)=0.3$ for $x_0=[0,0.5]\T$ to ensure uniqueness of the folding configuration, as shown in Figure \ref{freeboundary}. Regarding the folding model, the deflection is very small on the right side of the interface with main bending effects occurring on the left side. Large curvatures concentrate around the endpoints of the fold. In striking contrast to the deformation without a fold, the folding deformation bends downwards towards the middle $\{y=0.5\}$ of the plate on the left side of the fold due to the shape of the interface.

\begin{figure}[H]
\begin{minipage}{.49\linewidth}
\centering
\includegraphics[width=0.9\linewidth]{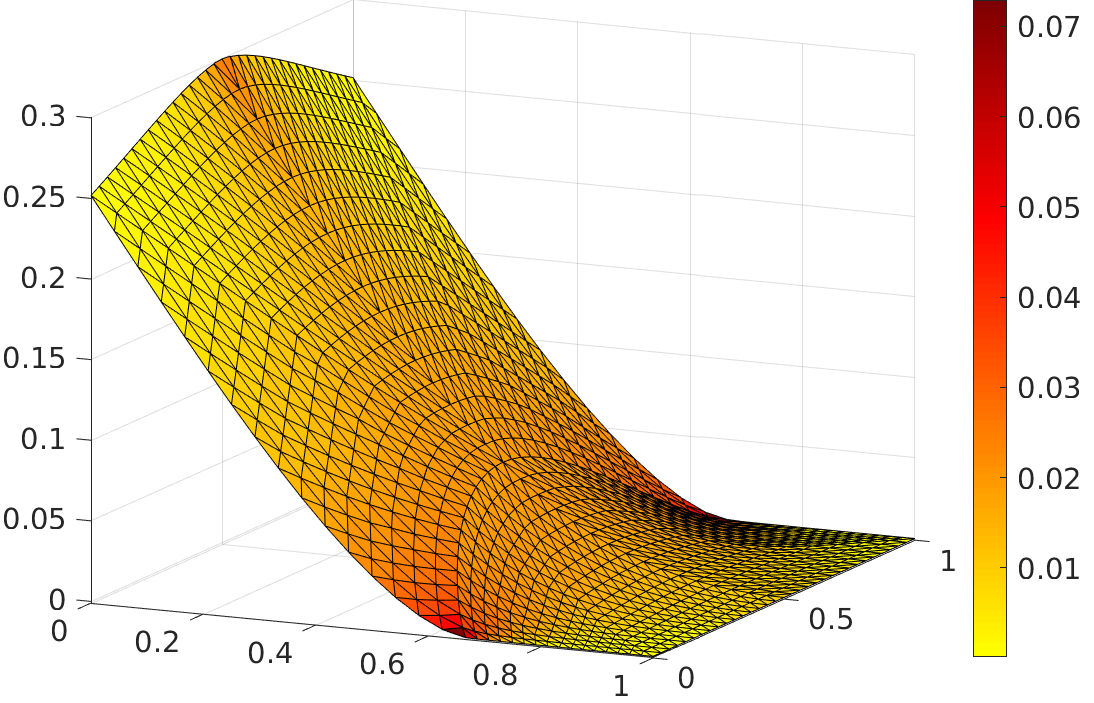}
\end{minipage}
\begin{minipage}{.49\linewidth}
\centering
\includegraphics[width=0.9\linewidth]{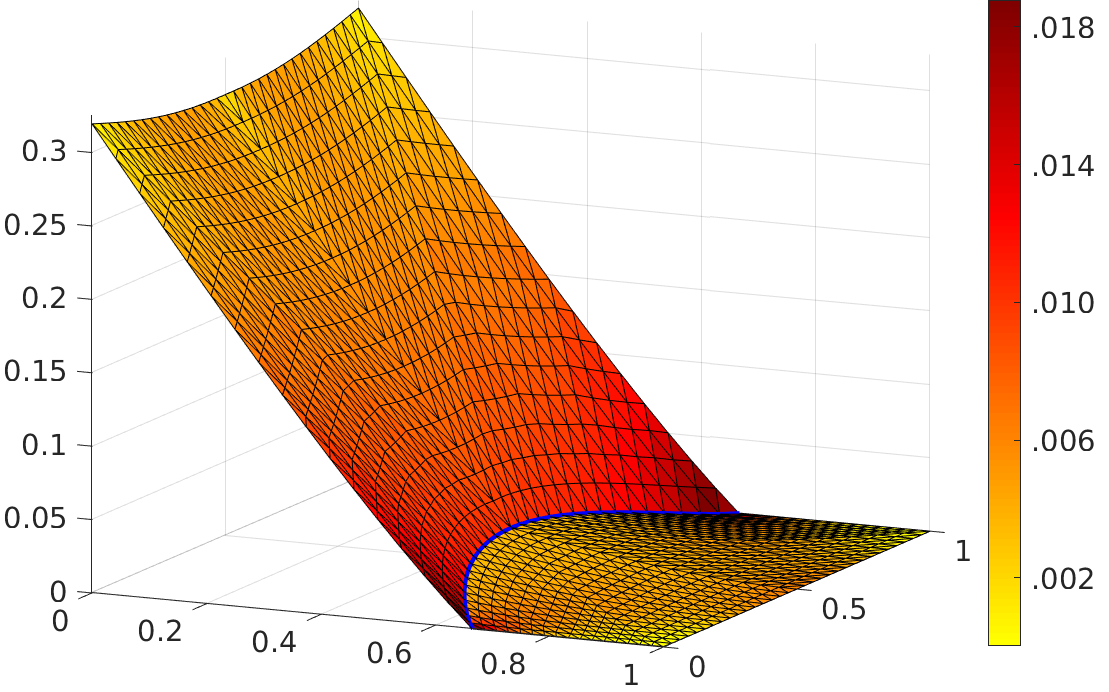}
\end{minipage}
\caption{Numerical simulation of the model without a fold (left) and the folding model with quadratic interface (right) for clamped boundary conditions on one side of the plate.}
\label{freeboundary}
\end{figure}

We observe from the simulations, that the shape of the folding curve has a great impact on the structure of the deformation and its energy distribution. Depending on the boundary conditions, large curvatures focus around the fold and the boundary of the plate. The shape of the interface dictates in which direction the plate bends on either side of the fold.

\subsection{Convergence Rates} Meaningful convergence rates require the availability of a piecewise regular solution to the continuous problem. The existence of such a solution depends on the geometry of the folding curve and its compatibility to the boundary conditions. To avoid possible single corner singularities (as can be observed in Figure \ref{fully_clamped_comp}), we consider clamped boundary conditions $g=\Phi=0$ on $\partial\Omega$ and a uniform force $f(x,y)=100$ on $\Omega$. We compare configurations and convergence rates for the model without a fold to the folding model with a straight interface at $\{x=0.5\}$, a piecewise linear and a piecewise quadratic interface, both approximating $\Gamma(y)=[2/3-1/6\sin(\pi y),y]^\top$ as $h\to0$, see Figure \ref{fully_clamped} and Table \ref{fully_clamped_conv_tab}.

\begin{figure}[H]
\centering
\begin{minipage}{.49\linewidth}
\includegraphics[width=0.9\linewidth]{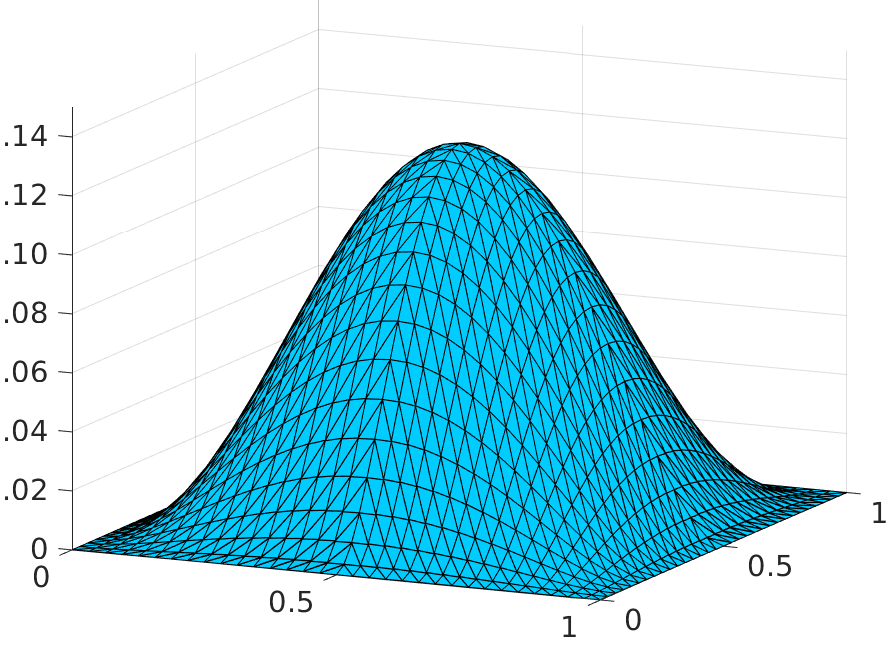}
\end{minipage}
\begin{minipage}{.49\linewidth}
\begin{tikzpicture}[scale=0.75]
\begin{axis}[
		xmode = log,
		ymode = log,
    xlabel={},
    ylabel={},
    ymax = 10^1.5,
    legend pos=north west,
    ymajorgrids=true,
    xmajorgrids=true,
    grid style=dashed,
    legend style={nodes={scale=0.9, transform shape}},
]
\addplot[
    color=red,
    mark=triangle,
    mark size = 3pt,
    ]
		table {error/NoFold/error_total_diff.txt};
    \addlegendentry{$||u-u_h||_{\text{dG}}$}
\addplot[
    color=green,
    mark=x,
    mark size = 3pt,
    ]
		table {error/NoFold/error_sides.txt};
    \addlegendentry{$||h^{-3/2}\dsquare{u_h}||_{\Gamma_h}$}
\addplot[
    color=blue,
    mark=diamond,
    mark size = 3pt,
    ]
		table {error/NoFold/error_grads.txt};
    \addlegendentry{$||h^{-1/2}\dsquare{\nabla_h u_h}||_{\Gamma_h\setminus\Gamma_h^{tr}}$}
\addplot[
		dashed,
    color=black,
    mark=none,
    line width = 0.9pt,
    domain = 0.011:0.707
    ]
    {5*x};
    \addlegendentry{$h$}  
\end{axis}
\end{tikzpicture}
\end{minipage}\vspace*{10pt}
\begin{minipage}{.49\linewidth}
\includegraphics[width=0.9\linewidth]{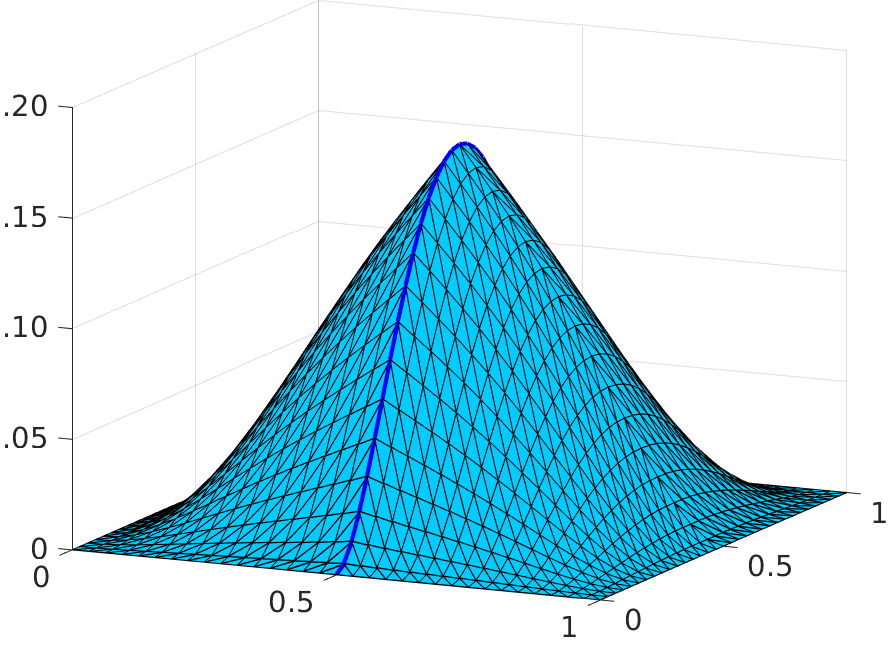}
\end{minipage}
\begin{minipage}{.49\linewidth}
\begin{tikzpicture}[scale=0.75]
\begin{axis}[
		xmode = log,
		ymode = log,
    xlabel={},
    ylabel={},
    ymax = 10^1.5,
    legend pos=north west,
    ymajorgrids=true,
    xmajorgrids=true,
    grid style=dashed,
    legend style={nodes={scale=0.9, transform shape}},
]
\addplot[
    color=red,
    mark=triangle,
    mark size = 3pt,
    ]
		table {error/StraightFold/error_total_diff.txt};
    \addlegendentry{$||u-u_h||_{\text{dG}}$}
\addplot[
    color=green,
    mark=x,
    mark size = 3pt,
    ]
		table {error/StraightFold/error_sides.txt};
    \addlegendentry{$||h^{-3/2}\dsquare{u_h}||_{\Gamma_h}$}
\addplot[
    color=blue,
    mark=diamond,
    mark size = 3pt,
    ]
		table {error/StraightFold/error_grads.txt};
    \addlegendentry{$||h^{-1/2}\dsquare{\nabla_h u_h}||_{\Gamma_h\setminus\Gamma_h^{tr}}$}
\addplot[
		dashed,
    color=black,
    mark=none,
    line width = 0.9pt,
    domain = 0.011:0.707
    ]
    {6*x};
    \addlegendentry{$h$}  
\end{axis}
\end{tikzpicture}
\end{minipage}\vspace*{10pt}
\begin{minipage}{.49\linewidth}
\includegraphics[width=0.9\linewidth]{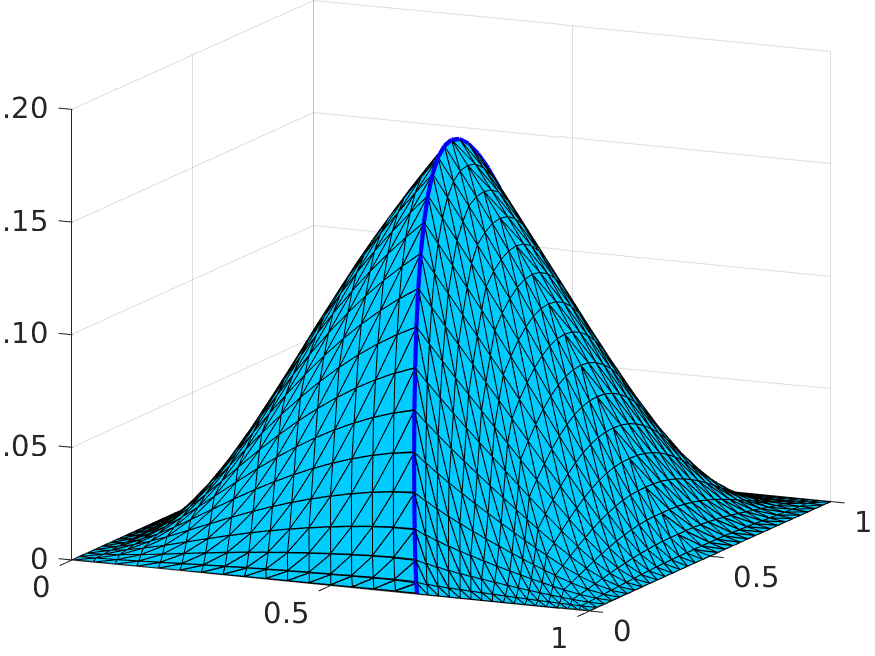}
\end{minipage}
\begin{minipage}{.49\linewidth}
\begin{tikzpicture}[scale=0.75]
\begin{axis}[
		xmode = log,
		ymode = log,
    xlabel={},
    ylabel={},
    ymax = 10^1.5,
    legend pos=north west,
    ymajorgrids=true,
    xmajorgrids=true,
    grid style=dashed,
    legend style={nodes={scale=0.9, transform shape}},
]
\addplot[
    color=red,
    mark=triangle,
    mark size = 3pt,
    ]
		table {error/PiecewiseLinearFoldSinus/error_total_diff.txt};
    \addlegendentry{$||u-u_h||_{\text{dG}}$}
\addplot[
    color=green,
    mark=x,
    mark size = 3pt,
    ]
		table {error/PiecewiseLinearFoldSinus/error_sides.txt};
    \addlegendentry{$||h^{-3/2}\dsquare{u_h}||_{\Gamma_h}$}
\addplot[
    color=blue,
    mark=diamond,
    mark size = 3pt,
    ]
    table {error/PiecewiseLinearFoldSinus/error_grads.txt};
    \addlegendentry{$||h^{-1/2}\dsquare{\nabla_h u_h}||_{\Gamma_h\setminus\Gamma_h^{tr}}$}
\addplot[
		dashed,
    color=black,
    mark=none,
    line width = 0.9pt,
    domain = 0.012:0.707
    ]
    {6*x};
    \addlegendentry{$h$}  
\end{axis}
\end{tikzpicture}
\end{minipage}\vspace*{10pt}
\begin{minipage}{.49\linewidth}
\includegraphics[width=0.9\linewidth]{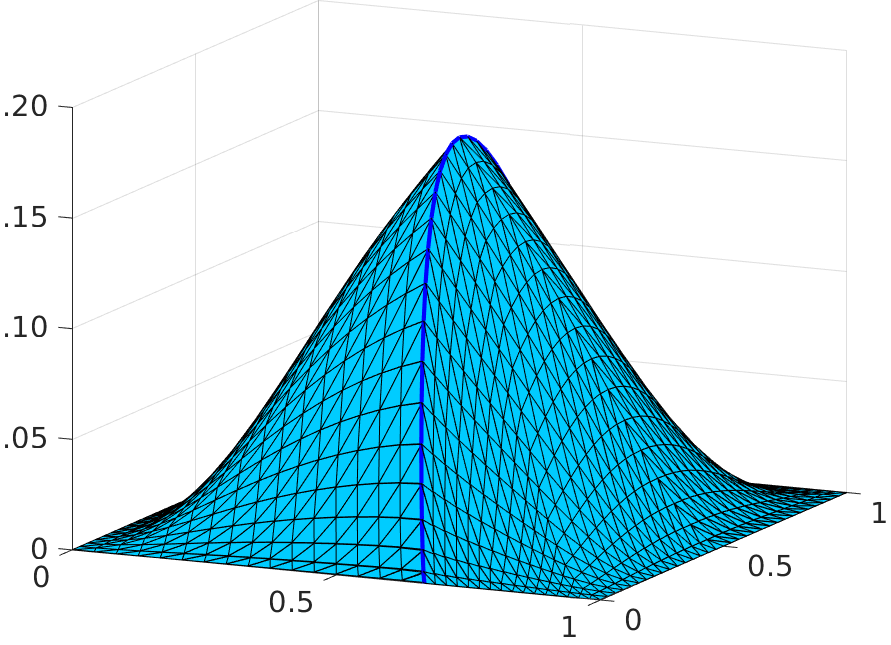}
\end{minipage}
\begin{minipage}{.49\linewidth}
\begin{tikzpicture}[scale=0.75]
\begin{axis}[
		xmode = log,
		ymode = log,
    xlabel={Meshsize h},
    ylabel={},
    ymax = 10^1.5,
    legend pos=north west,
    ymajorgrids=true,
    xmajorgrids=true,
    grid style=dashed,
    legend style={nodes={scale=0.9, transform shape}},
]
\addplot[
    color=red,
    mark=triangle,
    mark size = 3pt,
    ]
		table {error/CurvedFoldSinus/error_total_diff.txt};
    \addlegendentry{$||u-u_h||_{\text{dG}}$}
\addplot[
    color=green,
    mark=x,
    mark size = 3pt,
    ]
    table {error/CurvedFoldSinus/error_sides.txt};
    \addlegendentry{$||h^{-3/2}\dsquare{u_h}||_{\Gamma_h}$}
\addplot[
    color=blue,
    mark=diamond,
    mark size = 3pt,
    ]
    table {error/CurvedFoldSinus/error_grads.txt};
    \addlegendentry{$||h^{-1/2}\dsquare{\nabla_h u_h}||_{\Gamma_h\setminus\Gamma_h^{tr}}$}
\addplot[
		dashed,
    color=black,
    mark=none,
    line width = 0.9pt,
    domain = 0.012:0.707
    ]
    {6*x};
    \addlegendentry{$h$}  
\end{axis}
\end{tikzpicture}
\end{minipage}
\caption{Numerical solutions (left) and experimental error values (right) for the bending model without a fold and the folding model with a straight interface at $\{x=0.5\}$, a piecewise linear and a piecewise quadratic interface, both approximating $\Gamma(y)=[2/3-1/6\sin(\pi y),y]^\top$ (from top to bottom).}
\label{fully_clamped}
\end{figure}

Since the solution $u$ to the continuous problem is unknown, we compute for a sequence of triangulations $(\mathcal{T}_j)_{j=0,1,...}$ and $s_j=||u_j||_{\text{dG}}$ the extrapolated values
\begin{align*}
\tilde{s}_j = \frac{s_js_{j-2}-s_{j-1}^2}{s_j-2s_{j-1}+s_{j-2}}\,,
\end{align*}
in order to approximate the value $s=||u||_{\text{dG}}$. The errors $||u-u_h||_{\text{dG}}$ are then approximated using the Galerkin orthogonality
\[
||u-u_h||_{\text{dG}}^2=||u_h||_{\text{dG}}^2-||u||_{\text{dG}}^2\,.
\]

\begin{table}[H]
\begin{center}
\begin{tabular}{ | r | c | c | c | c | } 
  \hline
  DoFs & No Fold & Linear Fold & Pw. Lin. Fold Appr.& Pw. Quadr. Fold Appr.\\ 
  \hline
  192 & 1.3020 & 1.2874 & 1.2190 & 1.3384 \\
  768 & 1.3158 & 1.2801 & 1.1767 & 1.2049 \\ 
  3072 & 1.2823 & 1.2292 & 1.1243 & 1.1281 \\ 
  12288 & 1.2262 & 1.1597 & 1.0916 & 1.0948 \\ 
  49152 & 1.1744 & 1.0828 & 1.0242 & 1.0261 \\ 
  196608 & 1.1737 & 1.0820 & 1.0230 & 1.0248 \\ 
  \hline
\end{tabular}
\end{center}
\caption{Experimental convergence rates of the $||\,.\,||_{\text{dG}}$-norm for the bending model without a fold and the folding model with a straight interface at $\{x=0.5\}$, a piecewise linear and a piecewise quadratic interface, both approximating $\Gamma(y)=[2/3-1/6\sin(\pi y),y]^\top$ (from left to right).}
\label{fully_clamped_conv_tab}
\end{table}

We observe a linear rate of convergence from Figure \ref{fully_clamped} and Table \ref{fully_clamped_conv_tab} in all four cases. In particular, the simulations confirm the theoretical scaling $h^{k-1}$ of \thref{error_estimate_thm} for $k=2$. A Babuška-like paradox could not be observed for the piecewise linear approximation of the interface. The geometric consistency error might be too small to be quantified in our simulation.

\section*{Acknowledgements}
The authors SB and PT acknowledge support by the DFG via the priority programme SPP 2256 \textit{Variational Methods for Predicting Complex Phenomena in Engineering Structures and Materials} (BA 2268/7-1). The author AB is partially supported by NSF grant DMS-2110811.

{\small\bibliography{Error_estimates_linear_folding_model}}

\Addresses

\end{document}